\documentclass{amsart}

\usepackage{amsmath, amsfonts, amssymb,centernot, tikz, cite, amsthm}
\usepackage{enumerate, url,}
\usepackage[margin=1.6in]{geometry}

\newtheorem{thm}{Theorem}
\newtheorem{prop}[thm]{Proposition}
\newtheorem{lemma}[thm]{Lemma}
\newtheorem{mydef}[thm]{Definition}
\newtheorem{cor}[thm]{Corollary}
\newenvironment{remark}[1][Remark]{\begin{trivlist}
\item[\hskip \labelsep {\bfseries #1}]}{\end{trivlist}}

\DeclareMathOperator{\Tr}{Tr}
\DeclareMathOperator{\deter}{det}

\DeclareMathOperator{\im}{Im}
\DeclareMathOperator{\disc}{disc}

\begin{document}

\title[Perturbed periodic Jacobi matrices]{Spectral results for perturbed periodic Jacobi matrices using the discrete Levinson technique}

\author[E. Judge]{Edmund Judge}
\address{School of Mathematics, Statistics, and Actuarial Science\\ Sibson Building\\ University of Kent\\ Canterbury\\ Kent\\ CT2 7FS\\ United Kingdom}
\email{ej75@kent.ac.uk}

\author[S. Naboko]{Sergey Naboko}
\address{ Depart. of Math. Physics\\ Institute of Physics\\St. Petersburg State University\\
1 Ulianovskaia\\ St. Petergoff\\
St. Petersburg\\ 198504\\ Russia} \email{sergey.naboko@gmail.com}

\author[I. Wood]{Ian Wood}
\address{School of Mathematics, Statistics, and Actuarial Science\\ Sibson Building\\ University of Kent\\ Canterbury\\ Kent\\ CT2 7FS\\ United Kingdom}
\email{i.wood@kent.ac.uk}
\date{}

\begin{abstract}
 For an arbitrary Hermitian period-$T$ Jacobi operator, we assume a perturbation by a Wigner-von Neumann type potential to devise subordinate solutions to the formal spectral equation for a (possibly infinite) real set, $S$, of the spectral parameter. We employ discrete Levinson type techniques to achieve this, which allow the analysis of the asymptotic behaviour of the solution. This enables us to construct infinitely many spectral singularities on the absolutely continuous spectrum of the periodic Jacobi operator, which are stable with respect to an $l^1$-perturbation. An analogue of the quantisation conditions from the continuous case appears, relating the frequency of the oscillation of the potential to the quasi-momentum associated with the purely periodic operator.
\end{abstract}

\subjclass[2010]{Primary 47B36 ; Secondary 47A75}

\keywords{Jacobi operators, Levinson techniques, periodic operators, Wigner-von Neumann potentials, subordinate solutions}

\maketitle

\section{Introduction}

The simplest classical Levinson result roughly states that for the initial value problem $$y''+(\lambda^2-P(x))y=0, x\in\mathbb{R^+}, y(0,\lambda)=0,y'(0,\lambda)=1,$$ the solution will approach a sine function as $x$ tends to infinity, providing $\lambda$ is real and $P(x)$ obeys certain conditions, such as nonnegativity and tending to zero fast enough \cite{2,1}. In 1987, Benzaid and Lutz \cite{3} adapted and applied the theory to the study of discrete systems of the form \begin{equation}\label{7.1} x(n+1)=A(n+1)x(n), n\geq n_0\end{equation} where $x=(x(n))_{n\geq n_0}$ is a sequence of $\mathbb{C}^d$ vectors and $A=(A(n))_{n\geq n_0}$, a sequence of $d\times d$ complex matrices. Since then the assumptions on $A$ have been varied and investigated to better determine the effects on the spectrum of Jacobi matrices (see, for example, \cite{5, 6, 7,8,11, 12,25,mosz1,mosz2}). In this paper we explore Levinson type techniques and use a diagonal perturbation to produce subordinate solutions of the formal spectral equation (see~\eqref{4.25}) and to embed singularities into the absolutely continuous spectrum (a.c. spectrum) of periodic Hermitian Jacobi matrices, $J_T$. These are operators acting in $l^2(\mathbb{N})$ which are tri-diagonal and have the form \begin{equation}\label{4.10}
J_T:=\left(\begin{array}{ccccccccccccc}
 b_1&a_1&\\
 a_1&b_2&a_2&\\
 &a_2&b_3&a_3&\\
 &&\ddots&\ddots&\ddots&\\
 &&&a_{T-1}&b_T&a_T\\
 &&&&a_T&b_1&a_1&\\
 &&&&&a_1&b_2&a_2&\\
 &&&&&&\ddots&\ddots&\ddots&\\
 &&&&&&&a_{T-1}&b_{T}&a_T\\
 &&&&&&&&a_T&b_1&a_1&\\
 &&&&&&&&&a_1&b_2&a_{2}&\\
 &&&&&&&&&&\ddots&\ddots&\ddots
 \end{array}\right).\end{equation} We assume throughout that $a_i,b_i\in\mathbb{R}, a_i>0$ for all $i$.

 For Jacobi operators, $J$ (not necessarily periodic), we can employ the instrument of transfer matrices, $B_i(\lambda)$. The transfer
 matrices are given by \begin{equation}\label{4.2} B_{i}(\lambda):=\left(\begin{array}{cc}
0&1\\
\frac{-a_{i-1}}{a_i}&\frac{\lambda-b_i}{a_i}\end{array}\right), \lambda\in\mathbb{C}.\end{equation} They produce solutions (generalised eigenvectors or orthogonal polynomials) to the series of recurrence relations which determine the formal spectral equation, with parameter $\lambda$,
\begin{equation}
a_{n-1}u_{n-1}+b_nu_n+a_nu_{n+1}=\lambda u_n, n\geq 2,\label{4.25}
\end{equation}
 that must be satisfied to solve $J\underline{u}=\lambda \underline{u}$, where $\underline{u}:=(u_n)_{n\geq1}$. Sometimes we add the first row condition
  \begin{equation}b_1u_1+a_1u_2=\lambda u_1\label{4.28}\end{equation} to produce a particular generalized eigenvector (orthogonal polynomial in $\lambda$ of the first kind). In particular, the transfer matrices give the next component in the solution when two are already known, i.e. \begin{equation}\left(\begin{array}{c}
 u_n\\
 u_{n+1}\end{array}\right)=B_n(\lambda)\left(\begin{array}{c}
 u_{n-1}\\
 u_{n}\end{array}\right).\label{4.59}\end{equation} However, it should be stressed that this method using transfer matrices does not guarantee a sequence generated by \eqref{4.59} is an eigenvector for two reasons. Firstly, the transfer matrices might not necessarily encode the initial conditions,~\eqref{4.28}, of the Jacobi operator, and secondly if a subordinate (in our case decaying) solution should exist it is not guaranteed to belong to $l^2(\mathbb{N};\mathbb{C})$. This is where the need for more sophisticated asymptotic results, like Levinson's, arises.

A solution, $\underline{u}=(u_n)$, to \eqref{4.25} is said to be subordinate if and only if $$\lim_{N\rightarrow\infty} \frac{\|{\underline{u}}\|_N}{\|{\underline{v}}\|_N}=0,~~{\rm{where}}~~\|\underline{x}\|_N=\sqrt{\sum\limits_{n=1}^N |x_n|^2},$$ for any solution ${\underline{v}}=(v_n)_{n\geq 1}$ of~\eqref{4.25} not a constant multiple of ${\underline{u}}$. According to Gilbert-Pearson theory \cite{19,20} a detailed description of the spectral structure for periodic Jacobi operators can be inferred from the existence or non-existence of subordinate solutions to the formal spectral equation.

 Now let $M$ be the monodromy matrix for an arbitrary period-$T$ operator, i.e. \begin{equation}\label{4.1} M(\lambda):=B_{T}(\lambda)B_{T-1}(\lambda)\dots B_{1}(\lambda),\end{equation} with $a_0:=a_T$. Note $\deter(M(\lambda))\equiv 1$ and therefore the eigenvalues of $M(\lambda)$, $$\mu_\pm(\lambda):=\frac{\Tr M(\lambda)}{2}\pm \sqrt{\left(\frac{\Tr M(\lambda)}{2}\right)^2-1},$$ are algebraic functions in $\lambda$. We can use the matrix $M(\lambda)$ to canonically partition the points in the complex plane into three categories: hyperbolic, elliptic, parabolic.

\begin{mydef}
The hyperbolic points are those $\lambda\in\mathbb{C}$ that produce a monodromy matrix with two eigenvalues, $\mu_1,\mu_2$ such that
$|\mu_1|>1$ and $|\mu_2|<1$; elliptic points those that produce two distinct eigenvalues of modulus one; and parabolic points those that
produce one eigenvalue of  modulus one, equal to $1$ or $-1$, with algebraic multiplicity two.
\end{mydef}

\begin{remark}
One can easily see that the elliptic and parabolic points are real (see, for example, Lemma~2.5 in \cite{21}).
\end{remark}

For $\lambda\in\mathbb{R}$ we can distinguish the hyperbolic, elliptic and parabolic cases by $|\Tr(M(\lambda))|>2, |\Tr(M(\lambda))|< 2,
|\Tr(M(\lambda))|=2$, respectively. From the theory of Gilbert-Pearson it follows that elliptic and parabolic points lie in the a.c. spectrum, and as $\lambda\mapsto \Tr M(\lambda)$ is continuous, the a.c. spectrum for a period-$T$ Jacobi operator appears in bands: \begin{equation}\label{4.65}\sigma_{a.c.}(J_T)=\{\lambda\in\mathbb{R}~|~\Tr(M(\lambda))\leq 2\}.\end{equation}

\begin{mydef}
We define the generalised interior of the essential spectrum of $J_T$, denoted $\sigma_{ell}(J_T)$, to be the set of elliptic points.
\end{mydef}
 The first main theorem of the paper (recorded below) states that for any individual $\lambda$ in the generalised interior of the essential spectrum of an arbitrary period-$T$ Jacobi operator a potential, $(q_n)$, can be contrived such that the new diagonally-perturbed Jacobi operator has a subordinate solution at $\lambda$. In our case this is a solution, $\underline{u}:=(u_n)_{n\geq 1},$ to
\begin{equation}
a_{n-1}u_{n-1}+(b_n+q_n)u_n+a_nu_{n+1}=\lambda u_n, n\geq 2\label{4.26}
\end{equation}
that decays. Developing this idea further we construct a potential that is the sum of Wigner-von Neumann type potentials to produce infinitely many values of the spectral parameter (spectral singularities on the a.c. spectrum) of the period-$T$ Jacobi operator where subordinate solutions exist (see Theorem~\ref{transfer3}).

  Our focus in this paper is on subordinate solutions, and these are, in fact, stable with respect to an $l^1$-perturbation whereas eigenvalues are unstable with respect to arbitrarily small rank-$1$ perturbations (see, for example, \cite{sarg}). Although it is not proved here, we expect that the spectral density (i.e. the derivative of the spectral function) vanishes with power-like decay at points where we have subordinate solutions, giving pseudogaps in the spectrum (see \cite{28} and \cite{sim2} for the continuous case, and \cite{29} for the discrete Schr\"{o}dinger operator case). The potentials involved here to obtain these subordinate solutions have a Wigner-von Neumann structure (see, for example, \cite{24,26,27,28,32,30, 33}) which in its original conception for Schr\"{o}dinger equations had the form $$\frac{c\sin(2\omega
x+\varphi)}{x}.$$ Here it is adapted to the discrete setting accordingly, and is assumed a priori; the necessary size of the constant $c$ (or its analogue in our case) for the solution to lie in $l^2$, a subject of our investigation. Note that the first row condition, $$(q_1+b_1)u_1+a_1u_2=\lambda u_1$$ that either inhibits or enables the subordinate solution to become an eigenvector (and consequently, $\lambda$, an embedded eigenvalue) is dealt with later in the paper. The generalisation from the $1$-periodic case to the general periodic setting introduces new features, such as the set of points that are the roots to an algebraic function where the method fails.

\begin{thm}\label{transfer2}
For $\lambda\in\sigma_{ell}(J_T)$, let $e^{\pm i\theta(\lambda)}$ be the eigenvalues of $M(\lambda)$, where $\theta(\lambda)$ is the quasi-momentum. For any $\lambda\in\sigma_{ell}(J_T)$ outside an explicitly described finite set, we can choose $\omega$ s.t. $\omega T+2\theta(\lambda)\in2\pi\mathbb{Z}$ or $\omega T - 2\theta(\lambda)\in2\pi\mathbb{Z}$, and \begin{equation}\label{4.29} q_n=\frac{{c} \sin(n\omega+\phi)}{n}\end{equation} for some ${c}\in\mathbb{R}\setminus\{0\},\phi\in\mathbb{R}$, such that there exists a subordinate solution $\underline{u}:=\left(u_n\right)_{n\geq 1}$ to Equation~\eqref{4.26}. In this case, there exists a $\delta>0$ s.t. for $|{c}|>\delta$ the subordinate solution resides in $l^2$.
\end{thm}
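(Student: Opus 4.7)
The strategy is to convert the perturbed recurrence \eqref{4.26} into a discrete dynamical system of period length $T$, diagonalise the monodromy at the unperturbed level, and then apply a discrete Levinson-type theorem to the resulting quasi-diagonal system. First I would introduce the perturbed transfer matrix
\begin{equation*}
\tilde B_n(\lambda)=\begin{pmatrix}0&1\\-a_{n-1}/a_n&(\lambda-b_n-q_n)/a_n\end{pmatrix}=B_n(\lambda)-\frac{q_n}{a_n}E_{22},
\end{equation*}
and telescope over blocks of length $T$, producing a "perturbed monodromy" $\tilde M_k(\lambda)=\tilde B_{kT}\cdots\tilde B_{(k-1)T+1}$. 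Because $q_n=O(1/n)$ and the $B_j(\lambda)$ depend only on $j\pmod T$, we get $\tilde M_k(\lambda)=M(\lambda)+\frac{1}{k}R_k(\lambda)+O(k^{-2})$, where $R_k$ is an explicit linear combination, with coefficients built from the unperturbed transfer matrices, of $\sin\bigl((\,(k-1)T+j)\omega+\phi\bigr)$ for $j=1,\dots,T$.

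Next, since $\lambda\in\sigma_{ell}(J_T)$, the monodromy has two distinct unimodular eigenvalues $e^{\pm i\theta(\lambda)}$ and is diagonalisable, $M(\lambda)=S(\lambda)D(\lambda)S(\lambda)^{-1}$ with $D=\mathrm{diag}(e^{i\theta},e^{-i\theta})$. Passing to the new variable $y_k=S^{-1}x_{kT}$ I would obtain
\begin{equation*}
y_{k+1}=\bigl(D+\tfrac{1}{k}\tilde R_k+O(k^{-2})\bigr)y_k,\qquad \tilde R_k=S^{-1}R_kS.
\end{equation*}
Writing out the scalar entries of $\tilde R_k$, each is a finite sum of terms of the form $\alpha_j\sin\bigl(((k-1)T+j)\omega+\phi\bigr)$ with $\alpha_j\in\mathbb{C}$ depending algebraically on $\lambda$ through $S$ and the $B_j$. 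The key observation is that, after substituting $\sin=(e^{i\cdot}-e^{-i\cdot})/(2i)$, the off-diagonal entry $(\tilde R_k)_{12}$ in the Levinson-adapted system (i.e.\ after combining with $D^k=\mathrm{diag}(e^{ik\theta},e^{-ik\theta})$) carries a total phase $e^{ik(\omega T\pm 2\theta)}$. Precisely the resonance condition $\omega T\pm 2\theta\in2\pi\mathbb{Z}$ kills this oscillation, leaving a constant off-diagonal coefficient $\gamma(\lambda,c,\phi)$ of size proportional to $c$; the non-resonant terms continue to oscillate and, after one further step of averaging (summation by parts, absorbing them into a bounded correction), contribute only $l^1$-error.

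With this reduction the system becomes
\begin{equation*}
y_{k+1}=\Bigl(D_\theta+\tfrac{1}{k}\begin{pmatrix}a(\lambda)&\gamma(\lambda)\\ \overline{\gamma(\lambda)}& d(\lambda)\end{pmatrix}+l^1\text{-perturbation}\Bigr)y_k,
\end{equation*}
and one diagonalises the $O(1/k)$ correction: its eigenvalues have the form $\tfrac{1}{k}(\mu_\pm(\lambda,c))$ with $\mu_\pm$ real and, in the generic elliptic case, distinct with opposite signs once $|\gamma|$ is large enough. Applying the discrete Levinson theorem of Benzaid-Lutz then yields two solutions with asymptotics $y_k^{\pm}\sim k^{\mu_\pm}e^{\pm ik\theta}v_\pm$, so the one with negative exponent is subordinate; the condition $|c|>\delta$ is exactly what makes the decaying exponent satisfy $\mu_-<-1/2$, producing membership in $l^2$. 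Finally, the explicitly described finite exceptional set is where the construction degenerates: the parabolic endpoints of the band (where $S$ becomes singular and the two exponential asymptotics coalesce), and the finitely many $\lambda$ where $\gamma(\lambda)\equiv 0$ identically in $\phi$, i.e.\ roots of an algebraic function in $\lambda$ arising from the entries of $S^{-1}B_j S$.

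\textbf{Main obstacle.} The delicate part is the "averaging" step: one must verify that the non-resonant oscillating remainders in $\tilde R_k$ can genuinely be absorbed into an $l^1$-perturbation without destroying the Levinson dichotomy condition, and that the remaining resonant coefficient $\gamma(\lambda)$ does not vanish identically on $\sigma_{ell}(J_T)$; this forces a careful algebraic analysis of how the entries of $S^{-1}(\lambda)$ pair with the elementary perturbation matrices, and precisely produces the finite exceptional set excluded in the statement.
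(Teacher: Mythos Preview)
Your overall strategy---block reduction to the monodromy, diagonalisation of $M(\lambda)$, identification of the resonance $\omega T\pm2\theta\in2\pi\mathbb{Z}$, Harris--Lutz averaging of the non-resonant oscillations, and a Levinson-type conclusion---matches the paper's, as does your description of the finite exceptional set as the zero locus of an algebraic function (cf.\ Lemma~\ref{4.38}).

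The gap is in the final step. You write the reduced system as $y_{k+1}=\bigl(D_\theta+\tfrac{1}{k}P+l^1\bigr)y_k$ with $D_\theta=\mathrm{diag}(e^{i\theta},e^{-i\theta})$ and $P=\left(\begin{smallmatrix}a&\gamma\\\overline{\gamma}&d\end{smallmatrix}\right)$, then diagonalise $P$ and quote Benzaid--Lutz for $y_k^\pm\sim k^{\mu_\pm}e^{\pm ik\theta}$. But since $e^{i\theta}\neq e^{-i\theta}$, the off-diagonal $\gamma$ perturbs the eigenvalues of $D_\theta+\tfrac{1}{k}P$ only at order $k^{-2}$; to order $k^{-1}$ only the diagonal of $P$ survives, so the resonant coupling $\gamma$ (which carries the $c$-dependence) cannot produce the claimed exponent from this formulation. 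The paper's resolution is to work throughout in the rotating frame $\vec{u}_{kT}=M^k(\lambda)\vec{f}_k$, $\vec{g}_k=V^{-1}\vec{f}_k$, so that the leading matrix becomes $I$ rather than $D_\theta$; you actually perform this conjugation by $D^k$ to detect the resonance (``after combining with $D^k$'') but then revert to the unrotated frame for the endgame. Staying in the rotating frame, after two Harris--Lutz passes one obtains (see \eqref{4.52})
\[
\vec{m}_{k+1}=\Bigl(I+\tfrac{c}{k\sin\theta}\begin{pmatrix}0&-\overline{E(\lambda)}\\-E(\lambda)&0\end{pmatrix}+O(k^{-2})\Bigr)\vec{m}_k,
\]
with \emph{zero} diagonal: the diagonal $C_j$-part oscillates with frequency $\omega T\notin2\pi\mathbb{Z}$ and is removed by the first Harris--Lutz step. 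Now the $O(1/k)$ matrix commutes with the leading $I$, so diagonalising it is legitimate and yields the exponents $\pm c|E(\lambda)|/\sin\theta$ directly. The $O(k^{-2})$ remainder is then removed not by the $l^1$ Benzaid--Lutz theorem (the leading eigenvalues coincide, so there is no dichotomy) but by the Janas--Moszy\'nski-type Lemma~\ref{4.34}, after which the explicit product $\prod_{t\le k}\bigl(1\pm c|E|/(t\sin\theta)\bigr)\sim k^{\pm c|E|/\sin\theta}$ gives the subordinate solution and the $l^2$ threshold $|cE(\lambda)/\sin\theta|>1/2$.
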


\begin{remark}
We stress that the values of $\lambda\in\sigma_{ell}(J_T)$ for which the theorem holds are defined explicitly using the functions $E(\lambda), \widetilde{E}(\lambda),\widetilde{\widetilde{E}}(\lambda)$, given later in \eqref{4.31},\eqref{4.32} and \eqref{4.33}, respectively.
\end{remark}

The proof of the result is separated into five steps (Sections~\ref{transfersec1} to \ref{transfersec5}). Once the result regarding the subordinate solution for a single candidate eigenvector has been expounded, the initial conditions are discussed (Section \ref{transfersec5}) so that the value $\lambda$ becomes a formal eigenvalue. Then, in Section~\ref{transfersec6}, the technique is adapted to construct a collection of subordinate solutions corresponding to (possibly infinitely many) values of the spectral parameter in the elliptic spectrum. Finally we give an illustrative example for the special case of two candidate eigenvalues, where some extra conditions must be satisfied in order for the two subordinate solutions to become eigenvectors (Section~\ref{transfersec6}).

\begin{remark}
We direct the interested reader to our previous paper~\cite{21} in which we use a generalisation of the Wigner-von Neumann technique \cite{24} to embed a single eigenvalue into the essential spectrum of an arbitrary period-$T$ Jacobi operator. There, rather than assume a Wigner-von Neumann potential a priori, as we do in our present method, we deduce it from an ansatz we make for the eigenvector.
\end{remark}

\section{Variation of parameters}\label{transfersec1}
In this section we adopt a suitable change of discrete variables with the aim of simplifying the analysis of the transfer matrix product.

First, observe that for $n=kT$, for any solution $(u_n)$ to \eqref{4.26}, we have
 \begin{equation}\label{1}
 \left(\begin{array}{c}
 u_{n+T}\\
 u_{n+T+1}\end{array}\right)=:\vec{u}_{n+T}=M_{k}(\lambda)\vec{u}_n
 \end{equation}
where $$M_{k}(\lambda):=\left(\begin{smallmatrix}
0&1\\
-\frac{a_{T-1}}{a_T}&\frac{\lambda-b_T-q_{n+T}}{a_T}
\end{smallmatrix}\right)\left(\begin{smallmatrix}
0&1\\
-\frac{a_{T-2}}{a_{T-1}}&\frac{\lambda-b_2-q_{n+T-1}}{a_{T-1}}
\end{smallmatrix}\right)\dotsc\left(\begin{smallmatrix}
0&1\\
-\frac{a_T}{a_1}&\frac{\lambda-b_{1}-q_{n+1}}{a_1}
\end{smallmatrix}\right),$$ the perturbed monodromy matrix. Now considering only $n$ such that $n=kT$, define a new parameter $\vec{f}_k$ such that \begin{equation}\label{2}
\vec{u}_n=M^k(\lambda)\vec{f}_k,
\end{equation} where $M(\lambda)$ is the unperturbed monodromy matrix \eqref{4.1}. Substituting \eqref{2} into \eqref{1} gives
\begin{equation}\label{3.1}
\vec{u}_{n+T}=M_{k}(\lambda)M^k(\lambda)\vec{f}_k.
\end{equation} We define $$\Sigma_{k}(\lambda):=\sum\limits_{j=0}^{T-1} B_T(\lambda)\dotsc B_{T-(j-1)}(\lambda)\left(\begin{smallmatrix}
0&0\\
0&\frac{q_{T(k+1)-j}}{a_{T-j}}
\end{smallmatrix}\right)B_{T-(j+1)}(\lambda)\dots B_1(\lambda),$$ with $B_i(\lambda)$ as in \eqref{4.2}, and if the order of decreasing indices is formally violated we understand the corresponding product to be the identity matrix. By equating the information in Equations~\eqref{2} and {\eqref{3.1}}, and noting that $q_n=O\left(\frac{1}{n}\right)$, we obtain for $\lambda\in\sigma_{ell}(J_T)$
  \begin{align}
 \vec{f}_{k+1}&=M^{-k-1}(\lambda)M_{k}(\lambda)M^{k}(\lambda)\vec{f}_k\nonumber\\
 &=M^{-k-1}(\lambda)\left(M(\lambda)-\Sigma_{k}(\lambda)+O\left(\frac{1}{k^2}\right)\right)M^{k}(\lambda)\vec{f}_k\nonumber\\
 &=\left(I-M^{-k-1}(\lambda)\Sigma_{k}(\lambda) M^{k}(\lambda)+O\left(\frac{1}{k^2}\right)\right)\vec{f}_k\nonumber\\
 &=\bigg(I-V(\lambda)\left(\begin{smallmatrix}
\mu^{-k-1}&0\\
0&\overline{\mu}^{-k-1}\end{smallmatrix}\right)V^{-1}(\lambda)\Sigma_{k}(\lambda) V(\lambda)\left(\begin{smallmatrix}
\mu^{k}&0\\
0&\overline{\mu}^{k}\end{smallmatrix}\right)V^{-1}(\lambda)\nonumber\\
&+O\left(\frac{1}{k^2}\right)\bigg)\vec{f}_k,\label{4.22}
 \end{align}
 where we go from the second to the third line of the above calculation by using that $\|M^m(\lambda)\|$ is uniformly bounded for fixed $\lambda\in\sigma_{ell}(J_T)$ in $m\in\mathbb{Z}$; $\mu(\lambda)$ and $\overline{\mu(\lambda)}$ are the (conjugate) eigenvalues (of modulus 1) of the unperturbed transfer matrix, $M(\lambda)$, with $\lambda\in\sigma_{ell}(J_T); V(\lambda),V^{-1}(\lambda)$ are the matrices that diagonalise $M(\lambda)$.

\begin{remark}
It is sufficient to consider only $n$ of the type $n=kT$ for the general asymptotic analysis of our solution. This follows from the fact that for $n=kT+s$ where $s\in\{1,\dots,T-1\}$, we have the relation \begin{align}
\vec{u}_{kT+s}&=(B_{s}(\lambda-q_{kT+s})\dots B_{1}(\lambda-q_{kT+1}))M_{k-1}(\lambda)\dots M_{1}(\lambda)\vec{u}_0\nonumber\\
&=(B_{s}(\lambda-q_{kT+s})\dots B_{1}(\lambda-q_{kT+1}))\vec{u}_{Tk}\label{4.60}
\end{align}
and since the $B_j(\lambda-q_{kT+j})$ are invertible and $q_n$ tends to zero we have that for $k$ sufficiently large $$\|B_j(\lambda-q_{kT+j})\|\leq K_1; \|B_j^{-1}(\lambda-q_{kT+j})\|\leq K_2$$ for all $j\in\{1,\dots,T-1\}$, and finite $K_1,K_2$. Then \begin{align*}\|\vec{u}_{kT+s}\|&\leq K_1^s\|\vec{u}_{kT}\|, \|\vec{u}_{kT+s}\|\geq K_2^{-s}\|\vec{u}_{kT}\|,\\ \intertext{so} K_2^{-s}\|\vec{u}_{kT}\|&\leq \|\vec{u}_{kT+s}\|\leq K_1^{s}\|\vec{u}_{kT}\| \end{align*} and this with \eqref{4.60}, together with the fact $B_s(\lambda-q_{kT+s})\rightarrow B_s(\lambda)$ as $k\rightarrow\infty$, gives that the asymptotic behaviour of $\vec{u}_{kT}$ uniquely determines the asymptotic behaviour of $\vec{u}_{kT+s}$ for any $s\in\{0,\dots,T-1\}$. In other words, we can interpolate the asymptotic behaviour for $n=kT$ to arbitrary values of $n$.
\end{remark}
\section{Preparation for the Harris-Lutz procedure}\label{transfersec2}
In this section we apply variation of parameters again (this time on $\vec{f}_k$) and continue simplifying the expression down into something to which we can apply the Harris-Lutz procedure~\cite{13,15,25}. The Harris-Lutz procedure gives us a way to remove all the terms that do not affect the asymptotics from our analysis.

Observe that the unperturbed monodromy matrix, $M(\lambda)$, has the form $$M(\lambda)=\left(\begin{array}{cc}
p_1(\lambda)&p_2(\lambda)\\
p_3(\lambda)&p_4(\lambda)\end{array}\right),$$ where $p_1(\lambda), p_2(\lambda), p_3(\lambda), p_4(\lambda)$ are real polynomials in $\lambda$. For $j\in\{0,\dots,T-1\}$, $$B_{T}(\lambda)\dots B_{T-(j-1)}(\lambda)=\left(\begin{smallmatrix}
\alpha_1^{(j)}(\lambda)&\alpha_2^{(j)}(\lambda)\\
\alpha_3^{(j)}(\lambda)&\alpha_4^{(j)}(\lambda)\end{smallmatrix}\right)$$ and $$B_{T-(j+1)}(\lambda)\dots B_1(\lambda)=\left(\begin{smallmatrix}
\tilde{\alpha}_1^{(j)}(\lambda)&\tilde{\alpha}_2^{(j)}(\lambda)\\
\tilde{\alpha}_3^{(j)}(\lambda)&\tilde{\alpha}_4^{(j)}(\lambda)\end{smallmatrix}\right),$$where $\alpha_i^{(j)}(\lambda),\widetilde{\alpha}_i^{(j)}(\lambda)$ are also real polynomials in $\lambda$. Similarly to Lemma~2.1 from~\cite{21} we obtain the following result on the form of $\alpha_i^{(j)},\widetilde{\alpha}_i^{(j)}$ for $j\in\{0,\dots,T-1\}$:

\begin{lemma}\label{4.36}
For $B_i(\lambda), i\in\{1,\dots,T\}$, as described in Equation~\eqref{4.2}, we have that for $j\in\{0,\dots, T-1\}$
\begin{align*}
 \alpha_{1}^{(j)}(\lambda)&=\begin{cases} 1,~ j=0,\\ 0,~j=1,\\ -a_{T-j}\frac{\lambda^{j-2}}{\prod\limits_{s=T-j+1}^{T-1} a_s}+P_{j-3}(\lambda),~j\geq 2,\end{cases}\\
 \alpha_{2}^{(j)}(\lambda)&=\begin{cases} 0,~ j=0,\\ \frac{\lambda^{j-1}}{\prod\limits_{s=T-j+1}^{T-1} a_s}+P_{j-2}(\lambda),~j\geq 1,\end{cases}\\
 \alpha_{3}^{(j)}(\lambda)&=\begin{cases} 0,~ j=0,\\ -a_{T-j}\frac{\lambda^{j-1}}{\prod\limits_{s=T-j+1}^{T} a_s}+Q_{j-2}(\lambda),~j\geq 1,\end{cases}\\
 \alpha_{4}^{(j)}(\lambda)&=\frac{\lambda^j}{\prod\limits_{s=T-j+1}^T a_s}+P_{j-1}(\lambda),
 \end{align*}
 where $P_{j-1}(\lambda),P_{j-2}(\lambda),Q_{j-2}(\lambda)$ and $P_{j-3}(\lambda)$ are real polynomials in $\lambda$ of degree
 less than or equal to $j-1,j-2, j-2$ and $j-3$, respectively, and $P_k(\lambda)=0=Q_k(\lambda)$ for $k<0$.

 Similarly, for $j\in{0,\dots,T-1}$, we have
  \begin{align*}
 \widetilde{\alpha}_{1}^{(j)}(\lambda)&=\begin{cases} 1,~ j=T-1,\\ 0,~j=T-2,\\ -a_T\frac{\lambda^{T-j-3}}{\prod\limits_{s=1}^{T-j-2} a_s}+\widetilde{P}_{T-j-4}(\lambda),~j\leq T-3,\end{cases}\\
 \widetilde{\alpha}_{2}^{(j)}(\lambda)&=\begin{cases} 0,~ j=T-1,\\ \frac{\lambda^{T-j-2}}{\prod\limits_{s=1}^{T-j-2} a_s}+\widetilde{P}_{T-j-3}(\lambda),~j\leq T-2,\end{cases}\\
 \widetilde{\alpha}_{3}^{(j)}(\lambda)&=\begin{cases} 0,~ j=T-1,\\ -a_T\frac{\lambda^{T-j-2}}{\prod\limits_{s=1}^{T-j-1} a_s}+\widetilde{Q}_{T-j-3}(\lambda),~j\leq T-2,\end{cases}\\
 \widetilde{\alpha}_{4}^{(j)}(\lambda)&=\frac{\lambda^{T-j-1}}{\prod\limits_{s=1}^{T-j-1} a_s}+\widetilde{P}_{T-j-2}(\lambda),
 \end{align*}
 where $\widetilde{P}_{T-j-2}(\lambda),\widetilde{P}_{T-j-3}(\lambda),\widetilde{Q}_{T-j-3}(\lambda)$ and $\widetilde{P}_{T-j-4}(\lambda)$ are real polynomials in $\lambda$ of degree
 less than or equal to $T-j-2,T-j-3, T-j-3$ and $T-j-4$, respectively, and $\widetilde{P}_k(\lambda)=0=\widetilde{Q}_k$ for $k<0$.
 \end{lemma}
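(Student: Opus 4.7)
The plan is to prove both halves by induction on $j$, exploiting a one-step recursion for the matrix product.

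For the first set of formulas, set $A^{(j)}(\lambda) := B_T(\lambda) \cdots B_{T-(j-1)}(\lambda)$ with $A^{(0)} = I$, so that $A^{(j+1)} = A^{(j)} B_{T-j}$. Reading off the form of $B_{T-j}$ from~\eqref{4.2} yields the componentwise recursion
\begin{align*}
\alpha_1^{(j+1)} &= -\frac{a_{T-j-1}}{a_{T-j}}\alpha_2^{(j)}, & \alpha_2^{(j+1)} &= \alpha_1^{(j)} + \frac{\lambda-b_{T-j}}{a_{T-j}}\alpha_2^{(j)},\\
\alpha_3^{(j+1)} &= -\frac{a_{T-j-1}}{a_{T-j}}\alpha_4^{(j)}, & \alpha_4^{(j+1)} &= \alpha_3^{(j)} + \frac{\lambda-b_{T-j}}{a_{T-j}}\alpha_4^{(j)}.
\end{align*}
After checking the base cases $j=0$ (identity) and $j=1$ ($B_T$ itself) directly, I would substitute the claimed leading-order expressions from the induction hypothesis into each right-hand side. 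The new leading power in $\lambda$ comes uniquely from multiplying the previous leading coefficient by $\lambda/a_{T-j}$ (for $\alpha_2, \alpha_4$) or by $-a_{T-j-1}/a_{T-j}$ (for $\alpha_1, \alpha_3$); this produces exactly the stated prefactor and extends the denominator product by one factor. All remaining contributions have strictly lower degree than the new leading term and are absorbed into the appropriate $P$ or $Q$.

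For the tilde formulas, set $\widetilde{A}^{(j)}(\lambda) := B_{T-(j+1)}(\lambda) \cdots B_1(\lambda)$, so $\widetilde{A}^{(T-1)} = I$ and $\widetilde{A}^{(j-1)} = B_{T-j} \widetilde{A}^{(j)}$. I would run a downward induction from $j = T-1$, with base step $\widetilde{A}^{(T-2)} = B_1$ matching the formulas (here the convention $a_0 := a_T$ is what introduces the $-a_T$ prefactors). The componentwise relations $\widetilde{\alpha}_1^{(j-1)} = \widetilde{\alpha}_3^{(j)}$, $\widetilde{\alpha}_2^{(j-1)} = \widetilde{\alpha}_4^{(j)}$, together with the analogous linear combinations for $\widetilde{\alpha}_3^{(j-1)}$ and $\widetilde{\alpha}_4^{(j-1)}$, propagate the $-a_T$ factor unchanged through the induction, while the degree tracking goes exactly as in the first half.

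The main obstacle is not conceptual but book-keeping: keeping the two index systems ($j$ and $T-j$) consistent, and verifying at each step that the remainder polynomial of the declared degree bound actually absorbs all the lower-order inputs generated by the recursion (for example, in $\alpha_4^{(j+1)} = \alpha_3^{(j)} + (\lambda-b_{T-j})\alpha_4^{(j)}/a_{T-j}$, the degree-$(j-1)$ quantity $\alpha_3^{(j)}$ must slot cleanly into the new remainder $P_j$ beneath the degree-$j$ leading term). Since Lemma~2.1 of~\cite{21} treats the $1$-periodic analogue by the same mechanism, the proof here is essentially a careful annotation of that argument with the correct index shifts coming from non-constant $a_i$ and $b_i$.
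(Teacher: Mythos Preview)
Your proposal is correct and follows exactly the approach the paper intends: the paper does not spell out a proof at all but simply states that the lemma is obtained ``similarly to Lemma~2.1 from~\cite{21}'', and the induction you describe on the one-step recursion $A^{(j+1)}=A^{(j)}B_{T-j}$ (and its downward analogue for the $\widetilde{\alpha}$'s) is precisely that argument. One minor correction: Lemma~2.1 in~\cite{21} already treats the general period-$T$ case rather than only the $1$-periodic analogue, so your closing remark slightly understates the overlap; otherwise your book-keeping of indices, leading coefficients, and remainder degrees is accurate.
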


Now using the (small) freedom we have in diagonalising matrices $V(\lambda)$, for $M(\lambda), \lambda\in\sigma_{ell}(J_T)$, we can construct them such that the entries in the second row are equal to $1$. To show that this is always possible assume for contradiction that the eigenvector, $\vec{v}_1$, has second component zero, i.e. $\vec{v}_1=\left(\begin{array}{cc}
a\\
0\end{array}\right), a\neq 0$. Then $$\left(\begin{array}{cc}
p_1(\lambda)&p_2(\lambda)\\
p_3(\lambda)&p_4(\lambda)\end{array}\right)\left(\begin{array}{cc}
a\\
0\end{array}\right)=\mu\left(\begin{array}{cc}
a\\
0\end{array}\right).$$ Thus $$ap_1(\lambda)=\mu a\Rightarrow a(p_1(\lambda)-\mu)=0,$$
which gives $p_1(\lambda)=\mu$. However, $p_1(\lambda)\neq\mu$ because $p_1(\lambda)$ is real and, since $\lambda\in\sigma_{ell}(J_T)$, $\mu$ is non-real.  A simple calculation shows that $V(\lambda)=\left(\begin{array}{cc}
\frac{p_2(\lambda)}{\mu-p_1(\lambda)}&\frac{p_2(\lambda)}{\overline{\mu}-p_1(\lambda)}\\
1&1\end{array}\right)$. Note that the same reasoning as above shows that there are no eigenvectors of $M(\lambda)$, $\lambda\in\sigma_{ell}(J_T)$, with zero first component. Therefore, $V(\lambda)$ is always invertible for $\lambda\in\sigma_{ell}(J_T)$, as $\mu$ is non-real and $p_2(\lambda)\neq 0$. (The latter follows from the fact that if $p_2(\lambda)=0$ then the monodromy matrix is lower triangular and therefore the eigenvalues are the diagonal entries, which in this case are real.)

Define a new parameter, $\vec{g}_k$, such that \begin{equation}\label{4.51}\vec{g}_k:=V^{-1}(\lambda)\vec{f}_k.\end{equation} In terms of $\vec{g}_k$, Equation~\eqref{4.22} becomes
\begin{align}
\vec{g}_{k+1}&=\left(I-\left(\begin{array}{cc}
\overline{\mu}^{k+1}&0\\
0&{\mu}^{k+1}\end{array}\right)V^{-1}(\lambda)\Sigma_{k+1}(\lambda) V(\lambda)\left(\begin{array}{cc}
\mu^{k}&0\\
0&\overline{\mu}^{k}\end{array}\right)+O\left(\frac{1}{k^2}\right)\right)\vec{g}_k\nonumber\\
&=\bigg(I-\sum\limits_{j=0}^{T-1}\frac{q_{(k+1)T-j}}{a_{T-j}}\left(\begin{array}{cc}\overline{\mu}^{k+1}&0\\
0&{\mu}^{k+1}\end{array}\right)V^{-1}(\lambda)B_{T}(\lambda)\dots B_{T-(j-1)}(\lambda)\nonumber\\
&~~~\times\left(\begin{array}{cc}
0&0\\
0&1\end{array}\right)B_{T-(j+1)}(\lambda)\dots B_1(\lambda)V(\lambda)\left(\begin{array}{cc}
\mu^{k}&0\\
0&\overline{\mu}^{k}\end{array}\right)+O\left(\frac{1}{k^2}\right)\bigg)\vec{g}_k\nonumber\\
&=\bigg(I-\sum\limits_{j=0}^{T-1}\frac{q_{(k+1)T-j}}{a_{T-j}}\left(\begin{array}{cc}\overline{\mu}^{k+1}&0\\
0&{\mu}^{k+1}\end{array}\right)\left(\begin{array}{cc}
\frac{p_2}{\mu-p_1}&\frac{p_2}{\overline{\mu}-p_1}\\
1&1\end{array}\right)^{-1}\left(\begin{array}{cc}
\alpha_1^{(j)}&\alpha_2^{(j)}\\
\alpha_3^{(j)}&\alpha_4^{(j)}\end{array}\right)\nonumber\\
&~~\times\left(\begin{array}{cc}
0&0\\
0&1\end{array}\right)\left(\begin{array}{cc}
\tilde{\alpha}_1^{(j)}&\tilde{\alpha}_2^{(j)}\\
\tilde{\alpha}_3^{(j)}&\tilde{\alpha}_4^{(j)}\end{array}\right)\left(\begin{array}{cc}
\frac{p_2}{\mu-p_1}&\frac{p_2}{\overline{\mu}-p_1}\\
1&1\end{array}\right)\left(\begin{array}{cc}
\mu^{k}&0\\
0&\overline{\mu}^{k}\end{array}\right)+O\left(\frac{1}{k^2}\right)\bigg)\vec{g}_k\label{4.23}\end{align}
Consequently,
\begin{multline}
\vec{g}_{k+1}=\bigg(I+\frac{1}{i\sin\theta(\lambda)}\sum\limits_{j=0}^{T-1}\frac{q_{(k+1)T-j}}{a_{T-j}}\bigg\{\left(\begin{smallmatrix}
-\overline{C_j(\lambda)}&0\\
0&C_j(\lambda)
\end{smallmatrix}\right)\\+\left(\begin{smallmatrix}
0& -\overline{D_j(\lambda)}\overline{\mu}^{2k}\\
D_j(\lambda)\mu^{2k}&0
\end{smallmatrix}\right)\bigg\}+O\left(\frac{1}{k^2}\right)\bigg)\vec{g}_k,
\end{multline} where the explicit calculation of the product of the seven matrices in \eqref{4.23} gives \begin{multline}\label{4.74}C_j(\lambda):=\frac{|\mu-p_1(\lambda)|^2\mu}{2p_2(\lambda)}\left(\tilde{\alpha}_3^{(j)}(\lambda)\frac{p_2(\lambda)}{\overline{\mu}-p_1(\lambda)}+\tilde{\alpha}_4^{(j)}(\lambda)\right)\\
\times\left(-\alpha_2^{(j)}(\lambda)+\alpha_4^{(j)}(\lambda)\frac{p_2(\lambda)}{\mu-p_1(\lambda)}\right)\end{multline} and \begin{multline}\label{4.66}D_j(\lambda):=\frac{|\mu-p_1(\lambda)|^2\mu}{2p_2(\lambda)}\left(\tilde{\alpha}_3^{(j)}(\lambda)\frac{p_2(\lambda)}{\mu-p_1(\lambda)}+\tilde{\alpha}_4^{(j)}(\lambda)\right)\\
\times\left(-\alpha_2^{(j)}(\lambda)+\alpha_4^{(j)}(\lambda)\frac{p_2(\lambda)}{\mu-p_1(\lambda)}\right).\end{multline}

\begin{remark}
Observe that $C_j(\lambda)\neq 0, D_j(\lambda)\neq 0$ for $\lambda\in\sigma_{ell}(J_T)$. Indeed, we prove this property only for $C_j(\lambda)$ (the argument is similar for $D_j(\lambda)$). As $\mu$ is non-real, the first two sets of brackets in the definition of $C_j(\lambda)$ are non-zero. Now considering only the third set of brackets we see that if $\widetilde{\alpha}_3^{(j)}(\lambda)$ is non-zero then the bracket is non-vanishing, since $\overline{\mu}$ is non-real. Otherwise, for this bracket to vanish means $\widetilde{\alpha}_4^{(j)}(\lambda)$ should also be zero, and if so the determinant of the matrix product $B_{T}(\lambda)\dots B_{T-{j-1}}(\lambda)$ is zero, but this never happens. That the fourth, and final, set of brackets is non-zero follows similarly.
\end{remark}

\section{Application of the Harris-Lutz procedure}\label{transfersec3}
Here we employ the Harris-Lutz procedure which will permit the removal of the matrix with components $C_j$ from the expression defining $\vec{g}_{k+1}$. As the $C_j$ term contains no oscillation it cannot cancel the oscillation from the potential and therefore this term can be eliminated using a suitable Harris-Lutz transformation. We will use the following proposition to explore the oscillation properties of the potential, $q_n$:
\begin{prop}\label{zyg}{\rm{(see~\cite{18})}}. Assume $\alpha,\gamma, \widetilde{c}\in\mathbb{R}$ and $\widetilde{c}\geq 0,\gamma>0$, then the following holds:
$$\sum\limits_{k=n}^\infty \frac{e^{ik\alpha}}{k^\gamma+\widetilde{c}}=O\left(\frac{1}{n^\gamma}\right),n\rightarrow\infty,~~~\iff
\frac{\alpha}{2\pi}\not\in\mathbb{Z}.$$
\end{prop}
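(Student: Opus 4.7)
The plan is to treat this as a standard Abel summation (summation by parts) argument, exploiting the boundedness of the oscillatory partial sums $T_N := \sum_{k=n}^N e^{ik\alpha}$ whenever $\alpha/(2\pi)\notin\mathbb{Z}$.

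For the forward implication, if $\alpha/(2\pi)\notin\mathbb{Z}$ then $e^{i\alpha}\neq 1$ and the geometric series formula yields the uniform bound $|T_N|\leq 2/|e^{i\alpha}-1|$ for every $N\geq n$. Writing $b_k:=1/(k^\gamma+\widetilde{c})$ and applying summation by parts, I would obtain
\begin{equation*}
\sum_{k=n}^N \frac{e^{ik\alpha}}{k^\gamma+\widetilde{c}} = T_N\, b_N - \sum_{k=n}^{N-1} T_k\,(b_{k+1}-b_k).
\end{equation*}
Since $\gamma>0$, the boundary term tends to zero as $N\to\infty$. For the remaining sum, a derivative estimate for $x\mapsto 1/(x^\gamma+\widetilde{c})$ furnishes $|b_{k+1}-b_k|\leq \gamma/k^{\gamma+1}$ uniformly in $\widetilde{c}\geq 0$, since $x^\gamma+\widetilde{c}\geq x^\gamma$ forces $|b'(x)|\leq \gamma/x^{\gamma+1}$. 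Comparison with the integral $\int_n^\infty x^{-\gamma-1}\,dx$ then bounds the tail by $O(1/n^\gamma)$, and combining this with the uniform bound on $|T_k|$ yields the required estimate.

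For the converse, suppose $\alpha/(2\pi)\in\mathbb{Z}$. Then $e^{ik\alpha}=1$ for every $k$, so the sum reduces to $\sum_{k=n}^\infty 1/(k^\gamma+\widetilde{c})$. If $\gamma\leq 1$ this diverges, and if $\gamma>1$ then integral comparison yields a tail of order $n^{1-\gamma}/(\gamma-1)$, which is $\Theta(1/n^{\gamma-1})$ rather than $O(1/n^\gamma)$. Either way, the $O(1/n^\gamma)$ hypothesis is violated, so it forces $\alpha/(2\pi)\notin\mathbb{Z}$.

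The argument is essentially routine once the Abel summation identity is in hand and there is no genuine obstacle. The only minor technical point is establishing the difference estimate $|b_{k+1}-b_k|=O(1/k^{\gamma+1})$ uniformly in the parameter $\widetilde{c}\geq 0$, which is the ingredient that makes the bound independent of $\widetilde{c}$ and follows at once from the pointwise bound on $|b'(x)|$ noted above.
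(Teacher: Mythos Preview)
Your Abel summation argument is correct and is the standard route to this result. Note, however, that the paper does not supply its own proof of Proposition~\ref{zyg}: it simply cites Zygmund~\cite{18} and uses the statement as a black box. (Later, in Lemma~\ref{4.73}, the paper does prove the special case $\gamma=1$, $\widetilde{c}=0$ directly, via a slightly different device: it writes $\widetilde g_n e^{i\alpha}=\widetilde g_{n+1}+\sigma_n$ and solves for $\widetilde g_n$.) So there is no proof in the paper to compare against.

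One small simplification worth pointing out: since $b_k=1/(k^\gamma+\widetilde c)$ is monotone decreasing, the sum $\sum_{k=n}^{N-1}|b_{k+1}-b_k|$ telescopes to $b_n-b_N\le b_n\le 1/n^\gamma$. This already gives the required $O(1/n^\gamma)$ bound directly, so the derivative estimate $|b_{k+1}-b_k|\le\gamma/k^{\gamma+1}$ and the subsequent integral comparison, while correct, are not needed.
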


Now recall that $$q_{n}=\frac{{c}\sin(n\omega+\phi)}{n}$$ for some ${c}\in\mathbb{R}\setminus\{ 0\}, \phi\in\mathbb{R}$. Clearly, $q_n=O\left(\frac{1}{n}\right)$. Moreover, by assumption either $\omega T +2\theta(\lambda)\in 2\pi\mathbb{Z}$ or $\omega T -2\theta(\lambda)\in 2\pi\mathbb{Z}$, which implies $\omega T\not\in 2\pi\mathbb{Z}$ (since $0<\theta(\lambda)<\pi)$.

The Harris-Lutz technique can now be employed to simplify the recurrence equation in \eqref{4.23}. First, define $\vec{h}_k$ such that \begin{equation}\label{4.46}\vec{g}_k=(I+G_k)\vec{h}_k\end{equation} for some $G_k=O\left(\frac{1}{k}\right) \in\mathbb{C}^{2\times 2}$ that has yet to be defined. Then Equation~\eqref{4.23} becomes
\begin{align}
\vec{h}_{k+1}&=(I+G_{k+1})^{-1}\bigg(I+\frac{1}{i\sin\theta(\lambda)}\sum\limits_{j=0}^{T-1} \frac{q_{(k+1)T-j}}{a_{T-j}}\bigg\{\left(\begin{array}{cc}
-\overline{C_j(\lambda)}&0\\
0&C_j(\lambda)\end{array}\right)\nonumber\\
&~~~~~~~~+\left(\begin{array}{cc}
0&-\overline{D_j(\lambda)}\overline{\mu}^{2k}\\
D_j(\lambda)\mu^{2k}&0\end{array}\right)\bigg\}+O\left(\frac{1}{k^2}\right)\bigg)(I+G_k)\vec{h}_k,\label{4.24}
\end{align}
and by Neumann series $$\left(I+G_{k+1}\right)^{-1}=I-G_{k+1}+O\left(\frac{1}{k^2}\right),$$ providing $\|G_{k+1}\|\leq \frac{1}{2}$, strictly less than one. Generally, this condition need not be true, however we may assume this without loss of generality. Indeed, for large values of $k$ the condition is true, and one can rearrange the formula for $G_k$ putting $G_k=0$, for $k=1,2,3,\dots, N$ for $N$ sufficiently large. It is clear this correction will serve the same goal for the Harris-Lutz transformation satisfying the smallness condition. In what follows we will use this idea every time we use the Harris-Lutz transformation without especially mentioning it.

Define the functions $$T_1(k):=\frac{1}{i\sin\theta(\lambda)}\sum\limits_{j=0}^{T-1} \frac{q_{(k+1)T-j}}{a_{T-j}}\left(\begin{array}{cc}
-\overline{C_j(\lambda)}&0\\
0&C_j(\lambda)\end{array}\right)$$ and $$T_2(k):=\frac{1}{i\sin\theta(\lambda)}\sum\limits_{j=0}^{T-1} \frac{q_{(k+1)T-j}}{a_{T-j}}\left(\begin{array}{cc}
0&-\overline{D_j(\lambda)}\overline{\mu}^{2k}\\
D_j(\lambda)\mu^{2k}&0\end{array}\right).$$  As $T_1(k)$ and $T_2(k)$ are of order $k^{-1}$, we see that \begin{align*}
\vec{h}_{k+1}&=(I-G_{k+1})(I+T_1(k)+T_2(k))(I+G_k)\vec{h}_k+O\left(\frac{1}{k^2}\right)\vec{h}_k\\
&=\left(I+G_k-G_{k+1}+T_1(k)+T_2(k)+O\left(\frac{1}{k^2}\right)\right)\vec{h}_k.
\end{align*} Furthermore, defining $G_k:=-\sum\limits_{l=0}^\infty T_1(k+l)$ and letting \newline $F_j:=\frac{1}{a_{T-j}}\left(\begin{array}{cc}
-\overline{C_j(\lambda)}&0\\
0&C_j(\lambda)\end{array}\right)$, $\kappa:=\frac{1}{i\sin\theta(\lambda)}$, gives $$T_1(k)=\kappa\sum\limits_{j=0}^{T-1}F_jq_{(k+1)T-j},$$ so \begin{align*}G_k&=-\kappa\sum\limits_{l=0}^\infty\sum\limits_{j=0}^{T-1}F_jq_{(k+1)T-j+lT}\\
&=-\kappa\sum\limits_{l=0}^\infty\sum\limits_{j=0}^{T-1}F_j\im \frac{ce^{i(((k+1)T-j+lT)\omega+\phi)}}{(k+1)T-j+lT}=O\left(\frac{1}{k}\right),\end{align*} using Proposition~\ref{zyg} and consequently $G_k$ is well-defined. Then,
\begin{align}
G_{k+1}-G_k=-\sum\limits_{l=1}^\infty T_1(k+l)+\sum\limits_{l=0}^\infty T_1(k+l)=T_1(k)=O\left(\frac{1}{k}\right)\label{4.48},
\end{align}
 and the Harris-Lutz procedure is successful meaning that Equation~\eqref{4.23} can now be written as \begin{equation}\label{4.35}\vec{h}_{k+1}=\left(I+T_2(k)+O\left(\frac{1}{k^2}\right)\right)\vec{h}_k.\end{equation}

In the next section, we will use the Harris-Lutz procedure to get rid of the $T_2(k)$ term for almost every value $\lambda\in\sigma_{ell}(J_T)$, specifically those that do not satisfy the so-called quantisation conditions.

\section{The necessity of quantisation conditions}\label{transfersec4}
Here the effects of the Harris-Lutz procedure applied previously are seen. Moreover, in its new form the recurrence equation for $\vec{h}_{k}$ can be rearranged, again, to clarify the role of the potential and the conditions for resonance seen; specifically, what values of $\theta(\lambda)$ prohibit another application of the Harris-Lutz procedure to the entire expression.

So, in the aftermath of the Harris-Lutz procedure, we have
\begin{align*}
\vec{h}_{k+1}&=\left(I+T_2(k)+O\left(\frac{1}{k^2}\right)\right)\vec{h}_k\\
&=\bigg(I+\frac{1}{i\sin\theta(\lambda)}\sum\limits_{j=0}^{T-1}\frac{{c}\sin({((k+1)T-j)\omega+\phi})}{a_{T-j}((k+1)T-j)}\left(\begin{smallmatrix}
0&-\overline{D_j(\lambda)}\overline{\mu}^{2k}\\
D_j(\lambda)\mu^{2k}&0\end{smallmatrix}\right)\\
&~~~+O\left(\frac{1}{k^2}\right)\bigg)\vec{h}_k\\
&=\bigg(I+\frac{1}{i\sin\theta(\lambda)}\sum\limits_{j=0}^{T-1}\frac{{c}e^{i(((k+1)T-j)\omega+\phi)}-{c}e^{-i(((k+1)T-j)\omega+\phi)}}{2ia_{T-j}((k+1)T-j)}\\
&~~~\times\left(\begin{smallmatrix}
0&-\overline{D_j(\lambda)}e^{-2ik\theta}\\
D_j(\lambda)e^{i2k\theta}&0\end{smallmatrix}\right)+O\left(\frac{1}{k^2}\right)\bigg)\vec{h}_k.
\end{align*}

Then, using the relation
\begin{align*}
\frac{1}{(k+1)T-j}=\frac{1}{kT}+O\left(\frac{1}{k^2}\right),
\end{align*} we obtain
\begin{align}
\vec{h}_{k+1}&=\bigg(I+\frac{{c}}{k\sin\theta(\lambda)}\sum\limits_{j=0}^{T-1}\bigg\{\left(\begin{smallmatrix}
0&-\overline{a_j(\lambda)}+\overline{b_j(\lambda)}\\
-a_j(\lambda)+b_j(\lambda)&0\end{smallmatrix}\right)+O\left(\frac{1}{k^2}\right)\bigg)\vec{h}_k,\label{4.67}
\end{align} where $$a_j(\lambda):=E_j(\lambda)e^{i(k(2\theta(\lambda)+\omega T)+(T-j)\omega+\phi)},$$ $$b_j(\lambda):=E_j(\lambda)e^{-i(k(\omega T-2\theta(\lambda))+(T-j)\omega+\phi)}$$ and \begin{equation}\label{4.68} E_j(\lambda):=\frac{D_j(\lambda)}{2Ta_{T-j}}.\end{equation}

It is natural to ask whether the Harris-Lutz technique can be applied, again, in order to further simplify the recurrence equation. The next result shows that this can be done whenever the so-called quantisation conditions \begin{equation}\label{4.30}\omega T\pm 2\theta(\lambda)\in2\pi\mathbb{Z}\end{equation} are not satisfied and that for $\lambda$ not satisfying the quantisation conditions, we only have oscillating solutions. However, decay is needed for a subordinate solution.

\begin{remark}
The quantisation formula gives the only possible location for eigenvalues in the a.c. spectrum involving integer parameters in the style of the Bohr-Sommerfield condition (see, for example, \cite{bohr}). For the (continuous) periodic Schr\"{o}dinger operator case this appears in~\cite{32}.
\end{remark}

\begin{thm}\label{4.44}
Assume $\lambda\in\sigma_{ell}(J_T)$ does not satisfy either of the quantisation conditions in \eqref{4.30}. Then there is no subordinate solution to the perturbed recurrence relations in \eqref{4.26}. Moreover, each non-zero solution of the relation is not increasing and purely oscillating, exactly like the solution to the unperturbed system described by \eqref{4.25}.
\end{thm}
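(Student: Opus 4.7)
The plan is to execute a second Harris--Lutz transformation aimed at eliminating the $T_2(k)$ term in \eqref{4.35}. Write $\vec{h}_k = (I+H_k)\vec{j}_k$ and require $H_k - H_{k+1}$ to cancel $T_2(k)$ to leading order, which motivates the definition
\[ H_k := -\sum_{l=0}^{\infty} T_2(k+l). \]
Each entry of $T_2(k+l)$ is, up to multiplicative constants depending only on $\lambda$, of the form $(k+l)^{-1}$ times one of the exponentials
\[ e^{i((k+l)(\omega T + 2\theta(\lambda))+(T-j)\omega+\phi)}, \qquad e^{-i((k+l)(\omega T - 2\theta(\lambda))+(T-j)\omega+\phi)}. \]
Because $\lambda$ fails both quantisation conditions, neither $\omega T + 2\theta(\lambda)$ nor $\omega T - 2\theta(\lambda)$ lies in $2\pi\mathbb{Z}$, so Proposition~\ref{zyg} applies term by term and gives $H_k = O(1/k)$. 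As in Section~\ref{transfersec3}, we may harmlessly set $H_k=0$ for $k\leq N$ with $N$ sufficiently large to enforce $\|H_k\|\leq 1/2$, so that $(I+H_{k+1})^{-1} = I - H_{k+1} + O(1/k^2)$.

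Substituting into \eqref{4.35} and using $G_k + H_k - G_{k+1} - H_{k+1} = -(T_1(k)+T_2(k)) + O(1/k^2)$ cancels the $T_2(k)$ term, leaving
\[ \vec{j}_{k+1} = \bigl(I + O(1/k^2)\bigr)\vec{j}_k. \]
Since $\sum_k 1/k^2 < \infty$, the product $\prod_{m\geq k}(I+O(1/m^2))$ converges to an invertible matrix. Hence every solution $\vec{j}_k$ tends to a finite limit $\vec{j}_\infty$, and the assignment $\vec{j}_1 \mapsto \vec{j}_\infty$ is a linear bijection of $\mathbb{C}^2$. In particular, for any non-zero solution of \eqref{4.26} the associated $\vec{j}_k$ is bounded above and bounded below by positive constants for all large $k$.

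Unwinding the substitutions, $\vec{u}_{kT} = M^k(\lambda)\,V(\lambda)(I+G_k)(I+H_k)\vec{j}_k$. Since $M(\lambda)$ has distinct eigenvalues $\mu,\overline{\mu}$ of modulus one on $\sigma_{ell}(J_T)$, it is diagonalisable and $\|M^k(\lambda)\| \leq \|V(\lambda)\|\|V^{-1}(\lambda)\|$ uniformly in $k$; the same bound applies to $M^{-k}(\lambda)$, so $\|\vec{u}_{kT}\| \asymp \|\vec{j}_k\|$. Combined with the interpolation given in the remark after \eqref{4.22}, every non-zero solution $(u_n)$ satisfies $c_1 \leq |u_n|^2+|u_{n+1}|^2 \leq c_2$ for large $n$, so $\|\underline{u}\|_N \asymp \sqrt{N}$ and no solution is subordinate to any linearly independent solution. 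The oscillatory character follows from the leading asymptotic $\vec{u}_{kT} \sim M^k(\lambda)V(\lambda)\vec{j}_\infty$, which is the orbit of a fixed non-zero vector under the rotation-type action of $M(\lambda)$, mirroring the behaviour of the unperturbed solutions to \eqref{4.25}.

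The one non-routine ingredient is the convergence and size estimate $H_k = O(1/k)$; this is precisely where the hypothesis $\omega T \pm 2\theta(\lambda) \notin 2\pi\mathbb{Z}$ is indispensable, since if either sum of exponents were in $2\pi\mathbb{Z}$, Proposition~\ref{zyg} would fail and the corresponding tail sum would behave like $\log k$ rather than $1/k$, destroying the Harris--Lutz step. Everything after is standard Levinson-type asymptotic bookkeeping combined with uniform boundedness of $M^k(\lambda)$ on the elliptic spectrum.
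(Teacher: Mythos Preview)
Your proof is correct and follows essentially the same route as the paper: a second Harris--Lutz transformation with $H_k=-\sum_{l\ge 0}T_2(k+l)$, justified via Proposition~\ref{zyg} under the non-quantisation hypothesis, reduces the recurrence to $\vec{j}_{k+1}=(I+O(1/k^2))\vec{j}_k$, after which both arguments unwind the substitutions to $\vec{u}_{kT}\sim M^k(\lambda)\vec{r}$ and conclude bounded oscillation with no subordinate solution. Your reference to $G_k$ and $T_1(k)$ when substituting into \eqref{4.35} is superfluous (those terms were already absorbed in passing from \eqref{4.23} to \eqref{4.35}), but this is a harmless slip and does not affect the argument.
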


\begin{proof}
We introduce a new sequence of vectors $\vec{l}_k$ such that \begin{equation}\label{4.62}\vec{h}_k=(I+H_k)\vec{l}_k\end{equation} and where $H_k$ will be defined below to satisfy $H_k=O\left(\frac{1}{k}\right)$. Then, \eqref{4.35} implies
\begin{align*}
\vec{l}_{k+1}&=\left(I+H_{k+1}\right)^{-1}\left(I+T_2(k)+O\left(\frac{1}{k^2}\right)\right)\left(I+H_k\right)\vec{l}_{k}\\
&=\left(I+H_{k}-H_{k+1}+T_2(k)+O\left(\frac{1}{k^2}\right)\right)\vec{l}_k.
\end{align*} By Proposition~\ref{zyg} we have $$\sum\limits_{m=k}^\infty \sum\limits_{j=0}^{T-1}\frac{e^{i(m(\omega T\pm2\theta(\lambda))+(T-j)\omega+\phi)}}{m}=O\left(\frac{1}{k}\right),$$ since both $\omega T\pm 2\theta(\lambda)\not\in 2\pi\mathbb{Z}$ and choosing $$H_k=-\sum\limits_{j=k}^\infty T_2(j)=O\left(\frac{1}{k}\right)$$ with $$H_{k+1}-H_{k}=T_2(k)$$
we obtain $$\vec{l}_{k+1}=\left(I+O\left(\frac{1}{k^2}\right)\right)\vec{l}_k.$$ Without loss of generality we assume the matrices $(I+O\left(\frac{1}{k^2}\right))$ are invertible for all $k\in\mathbb{N}$. Moreover, using an elementary result (for example Lemma 2.1 in \cite{7}) we have \begin{equation}\label{4.61}\vec{l}_k=(C+o\left(1\right))\vec{l}_1,\end{equation} where $C\in\mathbb{C}^{2\times2}$ is invertible and $\lim_{k\rightarrow\infty}\vec{l}_k=C\vec{l}_1$. Then, substituting \eqref{4.61} into \eqref{4.62} we obtain \begin{equation}\label{4.63} \vec{h}_k=(I+H_k)(C+o(1))\vec{l}_1 \end{equation} and substituting this into \eqref{4.46} gives \begin{equation}\label{4.64} \vec{g}_k=(I+G_k)(I+H_k)(C+o(1))\vec{l}_1.\end{equation} Substituting \eqref{4.64} into \eqref{4.51} gives \begin{equation} \vec{f}_k=V(\lambda)(I+G_k)(I+H_k)(C+o(1))\vec{l}_1\end{equation} and, in turn, substituting this into \eqref{3.1} we obtain \begin{equation} \vec{u}_{kT}=M^k(\lambda)V(\lambda)(I+G_k)(I+H_k)(C+o(1))\vec{l}_1.\end{equation} Finally, recalling that $H_k,G_k\rightarrow 0$ and $B_j(\lambda+q_{kT+j})\rightarrow B_j(\lambda)$ as $k\rightarrow\infty$  we have \begin{equation}\vec{u}_{kT+s}=B_s(\lambda)B_{s-1}(\lambda)\dots B_1(\lambda)M^k(\lambda)\left(\vec{r}(\lambda)+o\left(1\right)\right) \end{equation} for $s\in\{0,\dots,T-1\}$ and where $\vec{r}:=V(\lambda)C\vec{l}_1\in\mathbb{C}^2$ which is arbitrary since $\vec{l}_1$ is arbitrary. Consequently, the solution to the perturbed system, \eqref{4.26}, behaves like the solution to the unperturbed system, \eqref{4.25}. Moreover, the solutions are bounded from above and therefore there are no subordinate solutions~by the generalised Behnke-Stolz Lemma (see Lemma~2.2 in \cite{7}).~\qedhere
\end{proof}

\begin{remark} The set of $\lambda$ satisfying the quantisation conditions is discrete and since the intervals of a.c. spectrum are closed the
theorem shows that $$\sigma_{a.c.}(J_T)\subseteq\sigma_{a.c.}(J_T+Q).$$ Moreover, since the a.c. spectrum always belongs to the essential spectrum, and the Weyl-Theorem gives that the essential spectrum for the perturbed periodic Jacobi operator is the same as for the unperturbed periodic Jacobi operator, we also have $$\sigma_{a.c.}(J_T+Q)\subseteq\sigma_{ess}(J_T+Q)=\sigma_{ess}(J_T).$$ Finally, we use the fact that a periodic Jacobi operator only has finitely many eigenvalues (all isolated and of finite geometric multiplicity) and by \cite{20} the singular continuous spectrum is empty, so that $$\sigma_{ess}(J_T)=\sigma_{a.c.}(J_T).$$ All together these give $$\sigma_{a.c.}(J_T)=\sigma_{a.c.}(J_T+Q).$$
\end{remark}

\section{Resonance cases and asymptotic behaviour of subordinate solutions}\label{transfersec5}

In this section, the final steps of the method are carried out. Indeed, each of the quantisation conditions are considered, giving three resonance cases in total. In each of the resonance cases, various techniques are employed (including the Harris-Lutz transformation, again, although not to the entire expression which the resonance cases  prohibit) so that ultimately it is established that up to a few exceptions, regardless of what resonance case we are in, a decaying solution exists.

Without loss of generality, in the consideration below we confine ourselves to one band of $\sigma_{ell}(J_T)$. Choose $\omega$ such that $0<\omega<2\pi$. All the resonance cases can be described as follows:   $${\rm{Case}}~1:2\theta(\lambda)+\omega T=2k_+\pi,~{\rm{where}}~k_+\in\{1,\dots,T~\},~\omega T\not\in\pi\mathbb{Z}.$$ This range of $k_+$ is a consequence of $0<\omega T+2\theta(\lambda)<2\pi(T+1)$. $${\rm{Case}}~2: 2\theta(\lambda)-\omega T=-2k_-\pi,~{\rm{where}}~k_-\in\{0,\dots,T-1\},~\omega T\not\in\pi\mathbb{Z}.$$ The range of $k_-$ follows as a similar consideration to $k_+$. And, finally,  the special case where both first conditions in Cases 1 and 2 are satisfied. $${\rm{Case}}~3: 2\theta(\lambda)+\omega T=2k_+\pi, 2\theta(\lambda)-\omega T=-2k_-\pi,$${\rm{where}}~$k_+\in\{1,\dots,T\},k_-\in\{0,\dots,T-1\}$. The range of $k_+,k_-$ follow similarly to before. Indeed, by considering $\theta(\lambda)$ we see that here $k_-=k_+-1, \theta(\lambda)=\frac{\pi}{2}$ (which corresponds to the generalised `midpoint' of one band of $\sigma_{ell}(J_T)$) and $\omega T=(k_++k_-)\pi$. Note that according to Theorem~\ref{4.44} one of these three cases will need to hold to obtain a subordinate solution.

Furthermore, since we will be discussing the asymptotics of recurrences we introduce an equivalence on the set of recurrences, two recurrences being equivalent when their solutions have the same asymptotic behaviour. Specifically, we say $(a)\sim_r (b)$ where
\begin{align*}
(a):~~~~a_{k+1}&=A_ka_k~~\forall k\in\mathbb{N}\\
(b):~~~~b_{k+1}&=B_kb_k~~\forall k\in\mathbb{N} \end{align*} with $A_k,B_k\in\mathbb{C}^{2\times2}$ and invertible for all $k>N$, for some $N\in\mathbb{N}$, whenever for any solution $(a_n)$ of $(a)$ there exists a solution $(b_n)$ of $(b)$ such that $a_n=C_nb_n$, for all $n>N$ and where $\lim_{n\rightarrow\infty}C_n$ exists and is invertible. It is convenient to formally ignore possible non-invertibility of matrices $A_k$ and $B_k$ for small values of $k$ since we are interested only in the asymptotic behavior of the solutions.

Before we start discussing the separate cases we state and prove a lemma that will be used in the following arguments.

\begin{lemma}\label{4.34}
Let $A\in\mathbb{C}$ and $\widetilde{c}\in\mathbb{R}$. The recurrence \begin{equation}\label{4.69}(u):~~~~ \vec{u}_{k+1}=\left(I+\frac{\widetilde{c}}{k}\left(\begin{array}{cc}
0&-{\overline{A}}\\
-{A}&0\end{array}\right)+O\left(\frac{1}{k^2}\right)\right)\vec{u}_k~~\forall k\end{equation} is equivalent to the recurrence $$(v):~~~~  \vec{v}_{k+1}=\left(I+\frac{\widetilde{c}}{k}\left(\begin{array}{cc}
0&-{\overline{A}}\\
-{A}&0\end{array}\right)\right)\vec{v}_k~~\forall k,$$ i.e. $(u)\sim_r (v)$.
\end{lemma}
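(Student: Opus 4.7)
The plan is to reduce this to a standard application of the discrete Levinson theorem of Benzaid--Lutz~\cite{3}, which is already the workhorse tool underlying the paper. Write $M_A:=\left(\begin{smallmatrix} 0 & -\overline{A}\\ -A & 0\end{smallmatrix}\right)$; its eigenvalues are $\pm|A|$, so provided $A\neq 0$ there is an invertible $P\in\mathbb{C}^{2\times 2}$ with $P^{-1}M_AP=D:=\mathrm{diag}(|A|,-|A|)$. Setting $\vec{\tilde{u}}_k:=P^{-1}\vec{u}_k$ and $\vec{\tilde{v}}_k:=P^{-1}\vec{v}_k$, the two recurrences become
\begin{equation*}
\vec{\tilde{u}}_{k+1}=(\Lambda_k+R_k)\vec{\tilde{u}}_k,\qquad \vec{\tilde{v}}_{k+1}=\Lambda_k\vec{\tilde{v}}_k,
\end{equation*}
with $\Lambda_k:=I+\tfrac{\widetilde{c}}{k}D$ diagonal and $R_k:=P^{-1}\cdot O(1/k^2)\cdot P=O(1/k^2)$. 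The degenerate subcases $A=0$ and $\widetilde{c}=0$ need no diagonalisation: both recurrences reduce to $\vec{u}_{k+1}=(I+O(1/k^2))\vec{u}_k$ and $\vec{v}_{k+1}=\vec{v}_k$, and since $\sum m^{-2}<\infty$ the infinite product $\prod_{m=N}^{\infty}(I+O(1/m^2))$ converges to an invertible limit, which furnishes the required $C_k$ directly.

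In the generic case $\widetilde{c}A\neq 0$, I would verify the Benzaid--Lutz hypotheses on the diagonalised system. Invertibility of $\Lambda_k$ for all large $k$ is immediate, and $R_k\in l^1(\mathbb{N};\mathbb{C}^{2\times 2})$ since $\sum k^{-2}<\infty$. The Levinson dichotomy condition on the diagonal entries of $\Lambda_m$, namely $1\pm\widetilde{c}|A|/m$, follows from the elementary asymptotic
\begin{equation*}
\prod_{m=N}^{k-1}\frac{1+\widetilde{c}|A|/m}{1-\widetilde{c}|A|/m}=\exp\!\left(\sum_{m=N}^{k-1}\left(\frac{2\widetilde{c}|A|}{m}+O(m^{-2})\right)\right)\sim\left(\frac{k}{N}\right)^{2\widetilde{c}|A|},
\end{equation*}
which, depending on the sign of $\widetilde{c}|A|$, decays or grows polynomially in $k$; in either direction the required dichotomy for the pair of diagonal indices holds.

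Benzaid--Lutz then furnishes a fundamental matrix $X(k)$ of the perturbed system with $X(k)=(I+o(1))Y(k)$ as $k\to\infty$, where $Y(k):=\prod_{m=1}^{k-1}\Lambda_m$ is the (diagonal) fundamental matrix of the unperturbed system. Returning to the original basis via $U_k:=PX(k)$ and $V_k:=PY(k)$ gives $U_kV_k^{-1}=P(I+o(1))P^{-1}=I+o(1)$. Any solution $\vec{u}_k$ of $(u)$ is of the form $\vec{u}_k=U_k\vec{\alpha}$ for a constant $\vec{\alpha}\in\mathbb{C}^2$; pairing it with the solution $\vec{v}_k:=V_k\vec{\alpha}$ of $(v)$ gives $\vec{u}_k=(U_kV_k^{-1})\vec{v}_k=:C_k\vec{v}_k$ with $C_k\to I$ invertible, establishing $(u)\sim_r(v)$. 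The main obstacle will be the careful bookkeeping around the Levinson dichotomy: the two diagonal entries of $\Lambda_k$ coincide at leading order and split only at the $O(1/k)$ level, so one must be sure that this sub-exponential polynomial growth/decay regime is captured correctly by the dichotomy hypothesis and that the $O(1/k^2)$ error retains its $l^1$ character after conjugation by the constant matrix $P$.
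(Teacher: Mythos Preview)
Your argument is correct, and it proceeds by a different mechanism from the one the paper uses. The paper does not diagonalise or invoke Benzaid--Lutz directly; instead it writes the recurrence as $\vec{u}_{k+1}=(I+p_kV_k+R_k)\vec{u}_k$ with $p_k=1/k$, $V_k\equiv\widetilde{c}M_A$ constant, $R_k=O(k^{-2})$, and then quotes the generalised Janas--Moszy\'nski theorem (Theorem~2 of \cite{17}) as a black box, checking only that $p_k\to 0$, $\sum p_k=\infty$, $R_k\in l^1$, and $\disc V_k=4\widetilde{c}^2|A|^2\neq 0$. Your route is more elementary in the sense that it unpacks that black box: you perform the diagonalisation by hand, reduce to a diagonal $\Lambda_k$ plus an $l^1$ remainder, and verify the Levinson dichotomy explicitly via the polynomial asymptotic $\prod_m(1+\widetilde{c}|A|/m)/(1-\widetilde{c}|A|/m)\sim k^{2\widetilde{c}|A|}$. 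This buys you transparency (one sees exactly where the sub-exponential splitting enters) and, incidentally, a cleaner treatment of the degenerate case $A=0$, which the paper's discriminant condition does not formally cover but which never arises in the applications. The paper's approach buys brevity: three hypotheses checked in three lines. Both arrive at the same $C_k\to I$ conclusion and hence at $(u)\sim_r(v)$.
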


\begin{proof}
 The case $\widetilde{c}=0$ is trivial. For the case $\widetilde{c}\neq0$, the result follows from a generalisation of the Janas-Moszynski result (see Theorem 2 in~\cite{17}), however we must check that the following conditions are satisfied. We need to write \eqref{4.69} in the following form  $$\vec{u}_{k+1}=\left(I+p_kV_k+R_k\right)\vec{u}_k,$$ where \begin{enumerate} \item $p_k\geq 0, p_k\rightarrow 0$ and $\sum\limits_{k=1}^\infty p_k=\infty$, \item $\{R_k\}$ is a sequence of $2\times 2$ matrices each matrix element belonging to the sequence space $l^1$, \item $\{V_k\}$ such that $\sum\limits_{k=1}^\infty \|V_{k+1}-V_k\|<\infty$ with $\disc V_k>0$ and satisfying $\disc\left(\lim_{k\rightarrow\infty} V_k\right)\neq 0$, where $\disc V_k:=\left(\Tr(V_k)\right)^2-4\deter(V_k)$.\end{enumerate} Defining $p_k:=\frac{1}{k}$, the first condition is satisfied. Then defining $R_k$ as the error term of matrices of order $O\left(\frac{1}{k^2}\right)$ we see that the second condition is also satisfied. Finally, defining $$V_k:=\widetilde{c}\left(\begin{array}{cc}
0&-{\overline{A}}\\
-{A}&0\end{array}\right),$$ we see that $V_k$ is just a constant matrix sequence and immediately satisfies the first constraint in condition 3, the other two following from the conjugate entries of the matrix.~\qedhere
\end{proof}

 We now resume our discussion of the different cases.

{\bf{{Case 1}}}
Here, $\omega T+2\theta(\lambda)=2k_+\pi,~~\omega T\not\in\pi\mathbb{Z}$. We have from \eqref{4.67} that \begin{align*}
&\vec{h}_{k+1}=\bigg(I+\frac{{c}}{k\sin\theta(\lambda)}\sum\limits_{j=0}^{T-1}\bigg\{\left(\begin{array}{cc}
0&-\overline{E_j(\lambda)}e^{-i((T-j)\omega+\phi)}\\
-E_j(\lambda)e^{i((T-j)\omega+\phi)}&0\end{array}\right)\\
&+\left(\begin{array}{cc}
0&\overline{E_j(\lambda)}e^{i(2k\omega T+(T-j)\omega+\phi)}\\
E_j(\lambda)e^{-i(2k\omega T+(T-j)\omega+\phi)}&0\end{array}\right)\bigg\}+O\left(\frac{1}{k^2}\right)\bigg)\vec{h}_k.
\end{align*}

Then the Harris-Lutz procedure (i.e. a substitution of the form $\vec{h}_k=(I+\hat{H}_k)\vec{m}_k$) can be used again to get rid of the oscillating second term (as $\omega T\not\in\pi\mathbb{Z})$. Then, removing the error term using Lemma~\ref{4.34}, and for a suitable choice of $\hat{H}_k$ (similar to the proof of Theorem~\ref{4.44}), we have
\begin{align}
\vec{m}_{k+1}&=\bigg(I+\frac{{c}}{k\sin\theta(\lambda)}\sum\limits_{j=0}^{T-1}\left(\begin{smallmatrix}
0&-\overline{E_j(\lambda)}e^{-i((T-j)\omega+\phi)}\\
-E_j(\lambda)e^{i((T-j)\omega+\phi)}&0\end{smallmatrix}\right)\\
&~~+O\left(\frac{1}{k^2}\right)\bigg)\vec{m}_k\nonumber\\
\sim_r \vec{m}_{k+1}&=\bigg(I+\frac{{c}}{k\sin\theta(\lambda)}\sum\limits_{j=0}^{T-1}\left(\begin{smallmatrix}
0&-\overline{E_j(\lambda)}e^{-i((T-j)\omega+\phi)}\\
-E_j(\lambda)e^{i((T-j)\omega+\phi)}&0\end{smallmatrix}\right)\bigg)\vec{m}_k\nonumber\\
&=\bigg(I+\frac{{c}}{k\sin\theta(\lambda)}\left(\begin{array}{cc}
0&-{\overline{E(\lambda;k_+)}}\\
-{E(\lambda;k_+)}&0\end{array}\right)\bigg)\vec{m}_k,\label{4.52}
\end{align}
where \begin{equation}\label{4.31} E(\lambda;k_+):=\sum\limits_{j=0}^{T-1}E_j(\lambda)e^{i((T-j)\omega+\phi)}.\end{equation}

Observing that $\omega=\omega(\lambda;k_+)=\frac{-2\theta(\lambda)+2k_+\pi}{T}$ we see $$e^{i(T-j)\omega}=\mu^{-2}(\lambda)\left(\mu^{\frac{2}{T}}(\lambda)e^{-\frac{i2k_+\pi}{T}}\right)^{j}.$$ By Corollary~2.2 in \cite{21}, $$\Tr M(\lambda)=\mu(\lambda)+\frac{1}{\mu(\lambda)}\sim \frac{\lambda^T}{\prod\limits_{s=1}^T a_s}~{\rm as}~\lambda\rightarrow\infty.$$ We choose the branch of the square-root so that $\mu(\lambda)$ is decreasing as $\lambda\rightarrow\infty$ and thus it follows that $\mu(\lambda)\sim\frac{\prod\limits_{s=1}^T a_s}{\lambda^T}$. Also, there exists an appropriate branch of $\left(\mu^2(\lambda)\right)^{\frac{1}{T}}$ such that $$\mu^{\frac{2}{T}}(\lambda)=e^{\frac{i2\pi l_+}{T}}\left(\mu_g^2(\lambda)\right)^{\frac{1}{T}},$$ for some $l_+\in\{1,\dots, T\}$, with $\left(\mu_g^2(\lambda)\right)^{\frac{1}{T}}\sim\frac{\prod\limits_{s=1}^T a_s^{\frac{2}{T}}}{\lambda^2}$ as $\lambda\rightarrow\infty$. Then, \begin{align*}e^{i(T-j)\omega}&=\mu^{-2}(\lambda)\left(\left(\mu_g^2(\lambda)\right)^{\frac{1}{T}}e^{\frac{i2 \pi l_+}{T}}e^{-\frac{i2k_+\pi}{T}}\right)^{j}\\
&=\mu^{-2}(\lambda)\left(\mu_g^2(\lambda)\right)^{\frac{j}{T}},
\end{align*} if $k_+$ is chosen such that $k_+=l_+$. This particular choice of $k_+$, which will vary depending on $\lambda$, ensures that there is no oscillation occurring in the expression $e^{i(T-j)\omega}$ between different values of $j$. We denote \begin{equation}E(\lambda):=E(\lambda;l_+).\end{equation}

Then $E(\lambda)=e^{i\phi}\mu^{-2}(\lambda)\sum\limits_{j=0}^{T-1}E_j(\lambda)\left(\mu_g^2(\lambda)\right)^{\frac{j}{T}}.$
\begin{lemma}\label{4.38}
The function $E(\lambda)$ is algebraic and is not identically zero.
\end{lemma}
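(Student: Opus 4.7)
The plan is to separate the two claims. The algebraic part is essentially a bookkeeping exercise, while the non-vanishing statement is the genuine content of the lemma and needs either an asymptotic computation at $\lambda=\infty$ or an algebraic-independence argument.

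For algebraicity, I would trace each ingredient of $E(\lambda)$ through its definition. The function $\mu(\lambda)$ is algebraic over $\mathbb{C}(\lambda)$ since it satisfies $\mu^2-\Tr(M(\lambda))\mu+1=0$. The entries $p_1(\lambda), p_2(\lambda)$ of $M(\lambda)$ and all $\alpha_i^{(j)}(\lambda), \widetilde{\alpha}_i^{(j)}(\lambda)$ are polynomials in $\lambda$ by Lemma~\ref{4.36}. Hence each $D_j(\lambda)$, given by formula~\eqref{4.66}, is a rational expression in $\mu$ and these polynomials, so $D_j\in\mathbb{C}(\lambda,\mu)$ and $E_j(\lambda)=D_j(\lambda)/(2Ta_{T-j})$ is algebraic. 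The branch $(\mu_g^2(\lambda))^{1/T}$ is algebraic as a root of $x^T-\mu^2(\lambda)=0$. Therefore $E(\lambda)=e^{i\phi}\mu^{-2}(\lambda)\sum_{j=0}^{T-1}E_j(\lambda)(\mu_g^2(\lambda))^{j/T}$ is a rational combination of algebraic functions, hence algebraic over $\mathbb{C}(\lambda)$.

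For non-vanishing, I would prefer the linear-independence route. Viewing $\zeta(\lambda):=(\mu_g^2(\lambda))^{1/T}$ as an element of an algebraic closure of $K:=\mathbb{C}(\lambda,\mu)$, it satisfies $\zeta^T=\mu^2$. When the minimal polynomial of $\zeta$ over $K$ has degree $T$ (which holds generically, in particular for odd $T$, by Capelli's criterion since $\mu$ is not a nontrivial power in $K$), the elements $1,\zeta,\ldots,\zeta^{T-1}$ form a $K$-basis of $K(\zeta)$, so
\[
\sum_{j=0}^{T-1}E_j(\lambda)\zeta^j(\lambda)\equiv 0\quad\Longleftrightarrow\quad E_j\equiv 0\text{ for all }j.
\]
By the Remark following \eqref{4.66}, $D_0(\lambda)\not\equiv 0$ on $\sigma_{ell}(J_T)$, so $E_0\not\equiv 0$ and hence $E\not\equiv 0$. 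For even $T$, where $\zeta^{T/2}=\pm\mu\in K$ reduces the extension degree to $T/2$, the sum collapses to $\sum_{j=0}^{T/2-1}(E_j\pm\mu E_{j+T/2})\zeta^j$, and one must check that the combined $j=0$ coefficient $E_0\pm\mu E_{T/2}$ is not identically zero in $K$; this can be done by a leading-order computation at $\lambda\to\infty$, since the asymptotic orders of $E_0$ and $\mu E_{T/2}$, read off from Lemma~\ref{4.36} and the expansions $\mu\sim \prod a_s/\lambda^T$, $(\mu_g^2)^{1/T}\sim\prod a_s^{2/T}/\lambda^2$, are different.

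The main obstacle is this second (asymptotic) step: the formulas for $D_j$ in~\eqref{4.66} have the delicate feature that the naive leading terms in $\widetilde{\alpha}_3^{(j)}\tfrac{p_2}{\mu-p_1}+\widetilde{\alpha}_4^{(j)}$ cancel to first order, since both behave like $\lambda^{T-j-1}/\prod_{s=1}^{T-j-1}a_s$ with opposite signs. One therefore has to rewrite the bracket as $(\widetilde{\alpha}_3^{(j)}p_2-\widetilde{\alpha}_4^{(j)}p_1+\widetilde{\alpha}_4^{(j)}\mu)/(\mu-p_1)$ and track the subleading polynomial terms $P_{j-2}, Q_{j-2},\widetilde{P}_{T-j-2},\widetilde{Q}_{T-j-3}$ coming from Lemma~\ref{4.36}, together with the subleading part of $\mu(\lambda)$. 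This is essentially routine but fiddly; once carried out it gives a nonzero leading coefficient of $E(\lambda)$ as $\lambda\to\infty$ and, by algebraicity, proves $E\not\equiv 0$ everywhere.
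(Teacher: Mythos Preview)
Your algebraicity argument is fine and matches the paper's one-line observation. The non-vanishing part, however, follows a genuinely different route from the paper, and the gap you yourself flag at the end is exactly where the paper's argument diverges and becomes cleaner.

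The paper does \emph{not} use any linear-independence/Capelli argument. Instead it computes the leading asymptotics of \emph{every} summand $D_j(\lambda)e^{i(T-j)\omega}$ as $\lambda\to\infty$ directly and shows that, up to a common nonzero complex factor, all of them are strictly positive, so no cancellation can occur in the sum. The key device that makes this tractable---and which bypasses precisely the cancellation in $\widetilde{\alpha}_3^{(j)}\tfrac{p_2}{\mu-p_1}+\widetilde{\alpha}_4^{(j)}$ that you identify as the obstacle---is an exact algebraic identity obtained from the factorisation $M(\lambda)=\bigl(\prod_{s>T-j}B_s\bigr)\,B_{T-j}\,\bigl(\prod_{s<T-j}B_s\bigr)$: comparing $(1,1)$ entries of $M\cdot(\widetilde{\alpha}^{(j)})^{-1}=(\alpha^{(j)})\,B_{T-j}$ and using $\det(B_{T-j-1}\cdots B_1)=a_T/a_{T-j}$ gives
\[
\widetilde{\alpha}_3^{(j)}p_2+\widetilde{\alpha}_4^{(j)}(\mu-p_1)=\frac{a_T}{a_{T-j}}\alpha_2^{(j)}+\mu\,\widetilde{\alpha}_4^{(j)}.
\]
This replaces the delicate bracket by $\tfrac{a_T}{a_{T-j}}\alpha_2^{(j)}+\mu\widetilde{\alpha}_4^{(j)}$, whose leading behaviour is read off immediately from Lemma~\ref{4.36} with no subleading bookkeeping at all. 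Your plan to ``track the subleading polynomial terms'' is unnecessary once you have this identity.

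As for your linear-independence shortcut: it is an attractive idea, but the justification is not complete as written. Capelli's criterion requires that $\mu^2$ (not $\mu$) fail to be a $p$-th power in $K=\mathbb{C}(\lambda,\mu)$ for every prime $p\mid T$, and ``generically, in particular for odd $T$'' is not a proof; one would have to argue via valuations on the function field $K$. Even granting this for odd $T$, the even-$T$ case still forces you back to an asymptotic computation that you do not carry out---whereas the paper's identity handles all $T\ge2$ uniformly. So your plan is more elaborate and leaves more to check than the paper's direct approach.
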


\begin{proof}
From the explicit formula \eqref{4.68} for $E_j(\lambda)$, and $\frac{1}{\mu(\lambda)}$, we see that $E(\lambda)$ is algebraic. To show that the function is not identically zero, we consider the case $T=1$ first. Here, $E(\lambda)$ is just a non-negative multiple of $D_0$, from \eqref{4.66}, and therefore, by the remark at the end of Section~\ref{transfersec2}, the function is not only non-trivial, but also non-zero for $\lambda\in\sigma_{ell}(J_T)$.

Next, consider $T\geq 2$. Letting $\lambda\rightarrow\infty, \mu\rightarrow 0$ we show that the highest-order term does not cancel. Note that from Lemma~2.1 in \cite{21} $$p_1(\lambda)\sim -\frac{a_T\lambda^{T-2}}{\prod\limits_{s=1}^{T-1}a_s}, p_2(\lambda)\sim\frac{\lambda^{T-1}}{\prod\limits_{s=1}^{T-1}a_s},$$ and using
$$\left(\begin{smallmatrix}
p_1(\lambda)&p_2(\lambda)\\
p_3(\lambda) &p_4(\lambda)\end{smallmatrix}\right)\left(\begin{smallmatrix}
\widetilde{\alpha}_1^{(j)}(\lambda)&\widetilde{\alpha}_2^{(j)}(\lambda)\\
\widetilde{\alpha}_3^{(j)}(\lambda)&\widetilde{\alpha}_4^{(j)}(\lambda)\end{smallmatrix}\right)^{-1}=\left(\begin{smallmatrix}
{\alpha}_1^{(j)}(\lambda)&{\alpha}_2^{(j)}(\lambda)\\
{\alpha}_3^{(j)}(\lambda)&{\alpha}_4^{(j)}(\lambda)\end{smallmatrix}\right)\left(\begin{smallmatrix}
0&1\\
-\frac{a_{T-j-1}}{a_{T-j}}&\frac{\lambda}{a_{T-j}}\end{smallmatrix}\right)$$
 we see from the (1,1) entry \begin{align}
\widetilde{\alpha}_3^{(j)}p_2+\widetilde{\alpha}_4^{(j)}(\mu-p_1)&=\frac{a_{T-j-1}}{a_{T-j}}\alpha_2^{(j)}\left(\widetilde{\alpha}_1^{(j)}\widetilde{\alpha}_4^{(j)}-\widetilde{\alpha}_2^{(j)}\widetilde{\alpha}_3^{(j)}\right)+\mu\widetilde{\alpha}_4^{(j)}\nonumber\\
&=\frac{a_{T-j-1}}{a_{T-j}}\alpha_2^{(j)}\deter\left(B_{T-j-1}(\lambda)\dots B_1(\lambda)\right)+\mu\widetilde{\alpha}_4^{(j)}\nonumber\\
&=\frac{a_T}{a_{T-j}}\alpha_2^{(j)}+\mu\widetilde{\alpha}_4^{(j)}, \quad j\in\{0,\dots,T-1\}\label{4.37} \end{align} where we have used $\deter B_j(\lambda)=\frac{a_{j-1}}{a_j}$.

Thus, recalling \eqref{4.66} and Lemma~\ref{4.36}, for $j\not\in\{0,T-2,T-1\}$ we have that the leading term of $D_j(\lambda)e^{i(T-j)\omega}$ as $\lambda$ tends to infinity is \begin{equation}\label{4.39}\frac{1-\mu(\lambda) p_1(\lambda)}{2(\mu(\lambda)-p_1(\lambda))p_2(\lambda)}\left(\frac{a_Ta_{T-j}\lambda^{3T-2}}{\left(\prod\limits_{s=1}^{T} a_s^{\frac{3T-2j}{T}}\right)\left(\prod\limits_{l=T-j}^{T-1} a^2_l\right)}\right).\end{equation} For $j=0$ the leading term of $D_j(\lambda)e^{i(T-j)\omega}$, as $\lambda$ tends to infinity, is \begin{equation}\label{4.40} \frac{1-\mu(\lambda) p_1(\lambda)}{2(\mu(\lambda)-p_1(\lambda))p_2(\lambda)}\left(\frac{a_T^2\lambda^{3T-2}}{\prod\limits_{s=1}^T a^3_s}\right).
\end{equation} For $j=T-2$ the leading term of $D_j(\lambda)e^{i(T-j)\omega}$, as $\lambda$ tends to infinity, is \begin{equation}\label{4.41}
\frac{1-\mu(\lambda) p_1(\lambda)}{2(\mu(\lambda)-p_1(\lambda))p_2(\lambda)}\left(\frac{a_T^3a_1^2a_2\lambda^{3T-2}}{\prod\limits_{s=1}^Ta_s^{\frac{3T+4}{T}}}\right).
\end{equation} For $j=T-1$ the leading term of $D_j(\lambda)e^{i(T-j)\omega}$, as $\lambda$ tends to infinity, is \begin{equation}\label{4.42}
\frac{1-\mu(\lambda) p_1(\lambda)}{2(\mu(\lambda)-p_1(\lambda))p_2(\lambda)}\left(\frac{a_1a_T^3\lambda^{3T-2}}{\prod\limits_{s=1}^{T} a_s^{\frac{2+3T}{T}}}\right).
\end{equation}

Since for each possible $j$ the leading term, up to an identical complex non-zero constant, is positive there is no chance of their cancellation in the sum that comprises $E(\lambda),$ and therefore the function is not identically zero.~\qedhere
\end{proof}
\begin{remark}
The function $E(\lambda)$ is algebraic, and therefore only has finitely many roots. Moreover, for $T=1$ there are no roots in $\sigma_{ell}(J_T)$. For the case $T=2$ with zero diagonal ($b_i=0$) we see by explicit calculation that \begin{multline*}E(\lambda;k_+)=\frac{e^{i\phi}(a_1+a_2\mu)}{8\lambda(a_1\mu+a_2)}\bigg[\frac{\mu\lambda^2e^{i2\omega(\lambda;k_+)}}{a_1a_2}\\+\left(\frac{\lambda^2-a_2^2}{a_1a_2}-\mu\right)\left(\frac{a_2}{a_1}+\mu\right) {e^{i\omega(\lambda;k_+)}}\bigg].\end{multline*} Then, we have that $E(\lambda; 2)=0$ if and only if $\lambda=\pm|a_1-a_2|$ which do not belong to the elliptic spectrum. Similarly, we have that $E(\lambda; 1)=0$ if and only if $\lambda=\pm(a_1+a_2)$, which again do not belong to the elliptic interval. Indeed, the points $\pm|a_1-a_2|,\pm(a_1+a_2)$ lie on the boundary.
\end{remark}

We now continue with a matrix transform. $\Gamma(\lambda):=\left(\begin{array}{cc}
0&-\frac{\overline{E(\lambda)}}{|E(\lambda)|}\\
-\frac{E(\lambda)}{|E(\lambda)|}&0\end{array}\right),$ for $\lambda\in\sigma_{ell}(J_T)$ with $E(\lambda)\neq 0$, is Hermitian, has trace zero and determinant equal to $-1$. This information dictates that $\Gamma(\lambda)$ has eigenvalues $1$ and $-1$ and is thus diagonalisable, i.e. $$\Gamma(\lambda)=W(\lambda)\left(\begin{array}{cc}
1&0\\
0&-1\end{array}\right)W^{-1}(\lambda),$$ where $W(\lambda)$ is the $2\times2$ matrix whose columns are the eigenvectors of $\Gamma$.
Consequently, we see that choosing $k_+=l_+$ \eqref{4.52} becomes
\begin{align*}
\vec{m}_{k+1}&= W(\lambda)\left[\prod\limits_{t=1}^{k} \bigg(I+\frac{{c}|E(\lambda)|}{t\sin\theta(\lambda)}\left(\begin{array}{cc}
1&0\\
0&-1\end{array}\right)\bigg)\right]W^{-1}(\lambda)\vec{m}_1~~~\\
&= W(\lambda)\left(\begin{array}{cc}
\prod\limits_{t=1}^{k}\left(1+\frac{{c}|E(\lambda)|}{t\sin\theta(\lambda)}\right)&0\\
0&\prod\limits_{t=1}^{k}\left(1-\frac{{c}|E(\lambda)|}{t\sin\theta(\lambda)}\right)\end{array}\right)W^{-1}(\lambda)\vec{m}_1\\
\sim_r \vec{m}_{k+1}&=\left(\begin{array}{cc}
\left(\widetilde{c}_1+o(1)\right) k^{\frac{{c}|E(\lambda)|}{\sin\theta(\lambda)}}&0\\
0&\left(\widetilde{c}_2+o(1)\right) k^{-\frac{{c}|E(\lambda)|}{\sin\theta(\lambda)}}\end{array}\right)\vec{m}_1,
\end{align*} for some non-zero constants $\widetilde{c}_1,\widetilde{c}_2$ depending on $\lambda\in\sigma_{ell}(J_T)$. Retracing the steps back to the original $u_n$ (as in the case of Theorem~\ref{4.44}).
This implies there exists a subordinate solution of the final system, \eqref{4.26}, asymptotically equivalent to $k^{-\left|\frac{{c}E(\lambda)}{\sin\theta(\lambda)}\right|}.$  This is in $l^2(\mathbb{N};\mathbb{C})$ if ${c}$ is large enough: $$\left|\frac{{c}E(\lambda)}{\sin\theta(\lambda)}\right|>\frac{1}{2},$$ where the value of $E(\lambda)$ is assumed to be non-zero. This completes the analysis for Case 1.

{\bf{{Case 2}}}
Here, $2\theta(\lambda)-\omega T=-2k_-\pi, \omega T\not\in\pi\mathbb{Z}$. We have from \eqref{4.67} that \begin{align*}
&\vec{h}_{k+1}=\bigg(I+\frac{{c}}{k\sin\theta(\lambda)}\sum\limits_{j=0}^{T-1}\bigg\{\left(\begin{array}{cc}
0&\overline{E_j(\lambda)}e^{i((T-j)\omega+\phi)}\\
E_j(\lambda)e^{-i((T-j)\omega+\phi)}&0\end{array}\right)\\
&-\left(\begin{array}{cc}
0&\overline{E_j(\lambda)}e^{-i(2Tk\omega+(T-j)\omega+\phi)}\\
E_j(\lambda)e^{i(2Tk\omega+(T-j)\omega+\phi)}&0\end{array}\right)\bigg\}+O\left(\frac{1}{k^2}\right)\bigg)\vec{h}_k.
\end{align*}

Then the Harris-Lutz procedure can be used again (i.e. a substitution of the from $\vec{h}_k=(I+\widetilde{H}_k)\vec{m}_k$) to get rid of the oscillating second term (as $\omega T\not\in\pi\mathbb{Z})$. Then, removing the error term using Lemma~\ref{4.34}, and for a suitable choice of $\widetilde{H}_k$ (similar to the proof of Theorem~\ref{4.44}), we have
\begin{align*}
\vec{m}_{k+1}&=\bigg(I+\frac{{c}}{k\sin\theta(\lambda)}\sum\limits_{j=0}^{T-1}\left(\begin{array}{cc}
0&\overline{E_j(\lambda)}e^{i((T-j)\omega+\phi)}\\
E_j(\lambda)e^{-i((T-j)\omega+\phi)}&0\end{array}\right)\\
&~~+O\left(\frac{1}{n^2}\right)\bigg)\vec{m}_k\\
\sim_r\vec{m}_{k+1}&=\bigg(I+\frac{{c}|\widetilde{E}(\lambda;k_-)|}{k\sin\theta(\lambda)}\left(\begin{array}{cc}
0&\frac{\overline{\widetilde{E}(\lambda;k_-)}}{|\widetilde{E}(\lambda;k_-)|}\\
\frac{\widetilde{E}(\lambda;k_-)}{|\widetilde{E}(\lambda;k_-)|}&0\end{array}\right)\bigg)\vec{m}_k,
\end{align*}
where \begin{equation}\label{4.32}\widetilde{E}(\lambda;k_-):=\sum\limits_{j=0}^{T-1}{E}_j(\lambda)e^{-i((T-j)\omega+\phi)}.\end{equation}

Observing that $\omega=\omega(\lambda;k_-)=\frac{2\theta(\lambda)+2k_-\pi}{T}$ we see $$e^{-i(T-j)\omega}=\mu^{-2}\left(\mu^{\frac{2}{T}}(\lambda)e^{\frac{i2k_-\pi}{T}}\right)^{j}.$$ As in Case 1 we can choose the branch such that $\mu(\lambda)\sim\frac{\prod\limits_{s=1}^T a_s}{\lambda^T}$ as $\lambda\rightarrow\infty$ and there exists an appropriate branch of $\left(\mu^2(\lambda)\right)^{\frac{1}{T}}$ such that $$\mu^{\frac{2}{T}}=e^{\frac{-i2\pi l_-}{T}}\left(\mu_g^2(\lambda)\right)^{\frac{1}{T}},$$ for some $l_-\in\{0,\dots, T-1\}$. Then \begin{align*}e^{i(T-j)\omega}&=\mu^{-2}(\lambda)\left(\left(\mu_g^2(\lambda)\right)^{\frac{1}{T}}(\lambda)e^{\frac{-i2 \pi l_-}{T}}e^{\frac{ik_-\pi}{T}}\right)^{j}\\
&=\mu(\lambda)^{-2}\left(\mu_g^2(\lambda)\right)^{\frac{j}{T}},
\end{align*} if $k_-$ is chosen such that $k_-=l_-$. We denote \begin{equation}\widetilde{E}(\lambda):=\widetilde{E}(\lambda;l_-).\end{equation}

\begin{lemma}\label{4.43}
The function $\widetilde{E}(\lambda)$ is algebraic and is not identically zero. Moreover we have that $$\widetilde{E}(\lambda)=e^{-i2\phi}E(\lambda).$$
\end{lemma}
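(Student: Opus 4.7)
The plan is to mirror the analysis already carried out for $E(\lambda)$ in Lemma~\ref{4.38}, and to show that after the appropriate choice $k_-=l_-$ the expression for $\widetilde{E}(\lambda)$ reduces to exactly the same sum as $E(\lambda)$ up to replacing $e^{i\phi}$ by $e^{-i\phi}$. Once this is established, both the algebraicity and the non-vanishing follow for free from the corresponding properties of $E(\lambda)$.

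Concretely, I would start from the definition $\widetilde{E}(\lambda;k_-)=\sum_{j=0}^{T-1}E_j(\lambda)e^{-i((T-j)\omega+\phi)}$ with $\omega=\omega(\lambda;k_-)=\frac{2\theta(\lambda)+2k_-\pi}{T}$. A direct expansion gives
\begin{align*}
e^{-i(T-j)\omega(\lambda;k_-)}
&=e^{-i2\theta(\lambda)}\,e^{i2\theta(\lambda)j/T}\,e^{-i2\pi k_-}\,e^{i2\pi k_- j/T}\\
&=\mu^{-2}(\lambda)\bigl(\mu^{2/T}(\lambda)\bigr)^{j}e^{i2\pi k_- j/T}.
\end{align*}
Using the branch choice $\mu^{2/T}(\lambda)=e^{-i2\pi l_-/T}\bigl(\mu_g^2(\lambda)\bigr)^{1/T}$ and setting $k_-=l_-$ collapses the $j$-dependent phase factor, leaving
\begin{equation*}
e^{-i(T-j)\omega(\lambda;l_-)}=\mu^{-2}(\lambda)\bigl(\mu_g^2(\lambda)\bigr)^{j/T}.
\end{equation*}
This is exactly the same factor that appears in the Case~1 formula for $e^{i(T-j)\omega(\lambda;l_+)}$ displayed just before Lemma~\ref{4.38}. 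Consequently
\begin{equation*}
\widetilde{E}(\lambda)=e^{-i\phi}\mu^{-2}(\lambda)\sum_{j=0}^{T-1}E_j(\lambda)\bigl(\mu_g^2(\lambda)\bigr)^{j/T},
\end{equation*}
so comparing with the explicit formula for $E(\lambda)$ obtained in Lemma~\ref{4.38} immediately yields $\widetilde{E}(\lambda)=e^{-2i\phi}E(\lambda)$.

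With this identity in hand, algebraicity of $\widetilde{E}(\lambda)$ is inherited from that of $E(\lambda)$ (the factor $e^{-2i\phi}$ is a non-zero constant), and non-vanishing follows from Lemma~\ref{4.38}, which ensures $E(\lambda)\not\equiv 0$. The only real work is the bookkeeping of the two different branch choices $l_+$ versus $l_-$ and the opposite sign of $\omega$ in the two cases; the possible obstacle is making sure that the cancellation of the residual phase factor $e^{i2\pi k_-j/T}$ against $e^{-i2\pi l_-j/T}$ really does proceed identically to the Case~1 cancellation, so that the resulting $j$-dependent factor is precisely the same in both cases. Once this is verified, nothing further is required.
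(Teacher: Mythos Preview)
Your proposal is correct and follows essentially the same approach as the paper's proof: both establish that after the choice $k_-=l_-$ the factor $e^{-i(T-j)\omega(\lambda;l_-)}$ coincides with $\mu^{-2}(\lambda)\bigl(\mu_g^2(\lambda)\bigr)^{j/T}$, the same factor arising in Case~1, so that $\widetilde{E}(\lambda)$ and $E(\lambda)$ differ only by the replacement $e^{i\phi}\mapsto e^{-i\phi}$. The paper's argument is extremely terse (essentially just writing down the two sums side by side), whereas you have spelled out the branch-matching computation in detail; but the underlying idea is identical.
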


\begin{proof}
We see that $$E(\lambda)=e^{i\phi}\sum\limits_{i=0}^{T-1} E_j(\lambda) e^{\frac{i2(T-j)}{T}};~~\widetilde{E}(\lambda)=e^{-i\phi}\sum\limits_{j=1}^{T-1} E_j(\lambda)e^{\frac{i2(T-j)}{T}}=e^{-i2\phi}E(\lambda).$$
That $\widetilde{E}(\lambda)$ is algebraic follows from the corresponding result for $E(\lambda)$.~\qedhere
\end{proof}

\begin{remark} For $T=1$ we have that $$E(\lambda;k_+)=\frac{e^{i\phi}}{4a_1\mu^2},~\widetilde{E}(\lambda;k_-)=\frac{e^{-i\phi}}{4a_1\mu^2}.$$
If $T>1$ then the functions $E(\lambda;k_+),\widetilde{E}(\lambda;k_-)$, besides the trivial dependence on the parameter $\phi$, depend on the frequency, $\omega$, of the perturbation, $(q_n)$, through the integer quantisation parameters, $k_+, k_-$, respectively. For $T=2$ we have that  $E(\lambda; 2)=e^{i\phi}A(\lambda), \widetilde{E}(\lambda;0)=e^{-i\phi}A(\lambda)$ where $$A(\lambda):=\frac{a_1+a_2\mu }{8a_1^2a_2\lambda(a_1\mu+a_2)}\left[\frac{1}{\mu}(\lambda^2(a_1+a_2)-a_2^3)-\mu a_1^2a_2-2a_2^2a_1+\lambda^2a_1\right],$$ and $E(\lambda;1)=e^{i\phi}B(\lambda), \widetilde{E}(\lambda;1)=e^{-i\phi}B(\lambda)$ where $$B(\lambda):=\frac{a_1+a_2\mu}{8a_1^2a_2\lambda(a_1\mu+a_2)}\left[\frac{1}{\mu}(\lambda^2(a_1-a_2)+a_2^3)+\mu a_1^2a_2+2a_2^2a_1-\lambda^2a_1\right].$$

The simple relationships between $E(\lambda;k_+)$ and $\widetilde{E}(\lambda;k_-)$ in general do not hold for $T>2$. This can be seen from the fact that \begin{equation}\label{4.57}e^{-i\left((T-j)\omega+\phi\right)}=e^{-i(T-j)\left(\frac{2\theta+2\pi k_-}{T}\right)-i\phi}=\mu^{\frac{2(T-j)}{T}}e^{\frac{2ij\pi k_-}{T}}e^{-i\phi},\end{equation} for $k_-\in\{0,\dots,T-1\}$ and \begin{equation}\label{4.58}e^{i\left((T-j)\omega+\phi\right)}=e^{i(T-j)\left(\frac{2\pi k_+-2\theta}{T}\right)+i\phi}={\mu^{\frac{2(T-j)}{T}}}e^{-\frac{2ij\pi k_+}{T}}e^{i\phi},\end{equation} for $k_+\in\{1,\dots,T\}$.
\end{remark}

By using the same diagonalisation argument as described in Case 1 we see that \eqref{4.52} becomes
\begin{align*}
\vec{m}_{k+1}&= W(\lambda)\left[\prod\limits_{t=1}^{k} \bigg(I+\frac{{c}|\widetilde{E}(\lambda)|}{t\sin\theta(\lambda)}\left(\begin{array}{cc}
1&0\\
0&-1\end{array}\right)\bigg)\right]W^{-1}(\lambda)\vec{m}_1\\
&=W(\lambda)\left(\begin{array}{cc}
\prod\limits_{t=1}^{k}\left(1+\frac{{c}|\widetilde{E}(\lambda)|}{t\sin\theta(\lambda)}\right)&0\\
0&\prod\limits_{t=1}^{k}\left(1-\frac{{c}|\widetilde{E}(\lambda)|}{t\sin\theta(\lambda)}\right)\end{array}\right)W^{-1}(\lambda)\vec{m}_1\\
\sim_r \vec{m}_{k+1}&=\left(\begin{array}{cc}
\left(\widetilde{c}_3+o(1)\right) k^{\frac{{c}|\widetilde{E}(\lambda)|}{\sin\theta(\lambda)}}&0\\
0&\left(\widetilde{c}_4+o(1)\right) k^{-\frac{{c}|\widetilde{E}(\lambda)|}{\sin\theta(\lambda)}}\end{array}\right)\vec{m}_1,
\end{align*} for some non-zero constants $\widetilde{c}_3,\widetilde{c}_4$ depending on $\lambda\in\sigma_{ell}(J_T)$.
As in Case 1, this implies there exists a subordinate solution of the final system, \eqref{4.26}, asymptotically equivalent to $k^{-\big|\frac{{c}\widetilde{E}(\lambda)}{\sin\theta(\lambda)}\big|}.$ This is in $l^2(\mathbb{N};\mathbb{C})$ if ${c}$ is large enough: $$\bigg|\frac{{c}\widetilde{E}(\lambda)}{\sin\theta(\lambda)}\bigg|>\frac{1}{2},$$ where the value of $\widetilde{E}(\lambda)$ is assumed to be non-zero. This completes the analysis for Case 2.

{\bf{{Case 3}}} Here $2\theta(\lambda)+\omega T=2k_+\pi,2\theta(\lambda)-\omega T=-2k_-\pi$ which implies $\theta(\lambda)=\frac{\pi}{2}, k_-=k_+-1$ and $\omega T=(2k_+-1)\pi$, $k_+\in\{1,\dots,T\}$. Thus, there are no oscillating terms and another application of the Harris-Lutz procedure is not needed. Then, from \eqref{4.67} and removing the error term using Lemma~\ref{4.34}, we have
\small\begin{align*}
\vec{h}_{k+1}&=\bigg(I-\frac{{c}}{k}\sum\limits_{j=0}^{T-1}\left(\begin{smallmatrix}
0&\overline{E_j(\lambda)}\left(e^{-i((T-j)\omega+\phi)}-e^{i((T-j)\omega+\phi)}\right)\\
E_j(\lambda)\left(e^{i((T-j)\omega+\phi)}-e^{-i((T-j)\omega+\phi)}\right)&0\end{smallmatrix}\right)\\
&~~+O\left(\frac{1}{k^2}\right)\bigg)\vec{h}_k\\
\sim_r\vec{h}_{k+1}&=\bigg(I-\frac{{c}}{k}\sum\limits_{j=0}^{T-1}\left(\begin{smallmatrix}
0&\overline{E_j(\lambda)}\left(e^{-i((T-j)\omega+\phi)}-e^{i((T-j)\omega+\phi)}\right)\\
E_j(\lambda)\left(e^{i((T-j)\omega+\phi)}-e^{-i((T-j)\omega+\phi)}\right)&0\end{smallmatrix}\right)\bigg)\vec{h}_k\\
&=\bigg(I-\frac{{c}|\widetilde{\widetilde{E}}(k_+)|}{k}\sum\limits_{j=0}^{T-1}\left(\begin{array}{cc}
0&\frac{\overline{\widetilde{\widetilde{{E}}}(k_+)}}{|\widetilde{\widetilde{E}}(k_+)|}\\
\frac{\widetilde{\widetilde{E}}(k_+)}{|\widetilde{\widetilde{E}}(k_+)|}&0\end{array}\right)\bigg)\vec{h}_k,
\end{align*}\normalsize
where \begin{equation}\label{4.33}\widetilde{\widetilde{E}}(k_+):=\sum\limits_{j=0}^{T-1} E_j\left(\theta^{-1}\left(\frac{\pi}{2}\right)\right)\left(e^{i(j\omega+\phi)}-e^{i(-j\omega+\phi)}\right)\end{equation} with the simplification $e^{i\omega T}=-1$ and the error term was removed using Lemma~\ref{4.34}.

\begin{remark}
Due to the very specific relations between $k_+$ and $k_-$ for this case, it is not possible to imitate the technique employed in $E(\lambda),\widetilde{E}(\lambda)$, to eliminate the roots of unity that arise from the expression $e^{\pm i(T-j)\omega}$, and show that the expression, $\widetilde{\widetilde{E}}(\lambda)$, is not identically zero simply by looking at the signs of the leading terms. However, since this case only arises when $\theta(\lambda)=\frac{\pi}{2}$, then by the strict monotonicity of $\theta(\lambda)$ on the elliptic interval (see, for example, Equation 3.10 in \cite{7.40}) there is only one $\lambda$ in each band of essential spectrum that is possibly excluded. Indeed, for the case $T=1$ we see that $\omega=\pi$ and $$\widetilde{\widetilde{E}}(k_+)=\frac{-1}{4a_1e^{i\phi}}\left(e^{2i\phi}-1\right),$$ which implies that providing $\phi\not\equiv 0\mod \pi$ the function $\widetilde{\widetilde{E}}(\lambda)$ is non-zero and the technique is therefore applicable. Note that in the current case if $\phi\equiv 0\mod \pi$, then $q_n\equiv 0$, so there are no subordinate solutions.
\end{remark}

Now, using the same diagonalisation argument described in Case 1, and defining $W:=W(\theta^{-1}(\frac{\pi}{2}))$ we obtain

\begin{align*}
\vec{h}_{k+1}&= W\left[\prod\limits_{t=1}^{k} \bigg(I+\frac{{c}|\widetilde{\widetilde{E}}(k_+)|}{t}\left(\begin{array}{cc}
1&0\\
0&-1\end{array}\right)\bigg)\right]W^{-1}\vec{h}_{1}\\
&= W\left(\begin{array}{cc}
\prod\limits_{t=1}^{k}\left(1+\frac{{c}|\widetilde{\widetilde{E}}(k_+)|}{t}\right)&0\\
0&\prod\limits_{t=1}^{k+1}\left(1-\frac{{c}|\widetilde{\widetilde{E}}(k_+)|}{t}\right)\end{array}\right)W^{-1}\vec{h}_1\\
\sim_r \vec{h}_{k+1}&= \left(\begin{array}{cc}
\left(\widetilde{c}_5+o(1)\right) k^{{c}|\widetilde{\widetilde{E}}(k_+)|}&0\\
0&\left(\widetilde{c}_6+o(1)\right) k^{-{c}|\widetilde{\widetilde{E}}(k_+)|}\end{array}\right)\vec{h}_1,
\end{align*} for non-zero constants $\widetilde{c}_5,\widetilde{c}_6$ depending on $\lambda\in\sigma_{ell}(J_T)$. As in Cases 1 and 2, this implies there exists a subordinate solution of the final system asymptotically equivalent to $k^{-{c}|\widetilde{\widetilde{E}}(k_+)|}.$ This is in $l^2(\mathbb{N};\mathbb{C})$ if ${c}$ is large enough: $$\big|{c}\widetilde{\widetilde{E}}(k_+)\big|>\frac{1}{2},$$ where $\widetilde{\widetilde{E}}(k_+)$ is assumed to be non-zero.

 Thus regardless of the case, there always exists a subordinate solution, providing a suitable Wigner-von Neumann potential is chosen and the corresponding value of $E(\lambda;k_+),\widetilde{E}(\lambda;k_-),\widetilde{\widetilde{E}}(k_+)$ is non-zero. The subordinate solution is in $l^2$ if we choose the constant $c$ large enough. This proves Theorem~\ref{transfer2}.

 \begin{remark}
 We expect that any $\lambda\in\sigma_{ell}(J_T)$ is not simultaneously a root of $E(\lambda;k_+),\widetilde{E}(\lambda;k_-)$, for all $k_+\in\{1,\dots,T\}, k_-\in\{0,\dots,T-1\}.$ Therefore, whenever the quantisation condition is satisfied a subordinate solution should exist.
 \end{remark}

The above result, however, gives only the chance to prove that a potential of the form, $q_n=\frac{c\sin(n\omega+\phi)}{n}$ embeds an eigenvalue, rather than produce only a subordinate solution. In order for it to be a true eigenvector, the initial conditions encoded in the periodic Jacobi operator must also be satisfied. This leads to:

 \begin{thm}\label{transfer5}
 Let $\lambda\in\sigma_{ell}(J_T).$ If $\theta(\lambda)\neq\frac{\pi}{2}$ choose $\omega$ s.t. $\omega T+2\theta(\lambda)= 2 \pi k_+$ and assume $E(\lambda;k_+)\neq 0$. Then construct a potential $(q_n)$ with $$q_n=\frac{{c}\sin(n\omega+\phi)}{n},~~n\geq 3$$ for arbitrary $\phi\in\mathbb{R}$. One can choose a suitable ${c}$ and real values $q_1, q_2$ such that $$\lambda\in\sigma_p(J_T+Q)$$ where $\sigma_p$ is the point spectrum and $Q$ is a diagonal matrix with entries $(q_n)$.
 \end{thm}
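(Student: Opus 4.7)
The plan is to invoke Theorem~\ref{transfer2} on the tail of the sequence, and then to dial in the two free real parameters $q_1$ and $q_2$ to match the two boundary conditions of the perturbed Jacobi equation at $n=1$ and $n=2$. Since we are in Case~1 ($\omega T + 2\theta(\lambda)=2\pi k_+$) with $E(\lambda;k_+)\neq 0$, Theorem~\ref{transfer2} produces, whenever $|c|$ is large enough so that $|c E(\lambda;k_+)/\sin\theta(\lambda)|>1/2$, a nontrivial real subordinate solution $(u_n)_{n\geq 3}$ of the recurrence~\eqref{4.26} (for $n\geq 3$) lying in $l^{2}(\mathbb{N})$; this tail is unique up to multiplication by a nonzero real scalar, which I fix by any convenient normalisation.

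The next step is to back-extend. The recurrence~\eqref{4.26} at $n=3$ involves only $q_3$, which is already fixed by the Wigner--von~Neumann formula, so $u_2$ is uniquely determined from the tail values $u_3,u_4$. The recurrence~\eqref{4.26} at $n=2$ then gives
\begin{equation*}
u_1 = \frac{(\lambda-b_2-q_2)u_2 - a_2 u_3}{a_1},
\end{equation*}
which is an affine function of the free parameter $q_2\in\mathbb{R}$. The principal (and essentially only) subtlety is verifying that $q_2$ can be chosen so that $u_1\neq 0$. There are two cases. If $u_2\neq 0$, the slope $-u_2/a_1$ is nonzero, so $u_1\neq 0$ for all but one value of $q_2$. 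If instead $u_2=0$, then $u_3\neq 0$: were both zero, applying~\eqref{4.26} at $n=3,4,\dots$ would propagate $u_n=0$ for all $n\geq 2$, contradicting non-triviality of the solution supplied by Theorem~\ref{transfer2}. In that subcase $u_1=-a_2 u_3/a_1\neq 0$ regardless of $q_2$.

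With $u_1\neq 0$ secured, the first-row condition $(b_1+q_1)u_1+a_1u_2=\lambda u_1$ becomes one linear equation in the single unknown $q_1$, solved uniquely by
\begin{equation*}
q_1 = (\lambda-b_1) - \frac{a_1 u_2}{u_1}\in\mathbb{R}.
\end{equation*}
The resulting sequence $(u_n)_{n\geq 1}$ satisfies $(J_T+Q)\underline{u}=\lambda\underline{u}$ with $Q=\mathrm{diag}(q_n)_{n\geq 1}$. Modifying only the first two entries does not affect summability of the $l^{2}$ tail provided by Theorem~\ref{transfer2}, so $\underline{u}\in l^{2}(\mathbb{N})$ and therefore $\lambda\in\sigma_p(J_T+Q)$. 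No quantisation or spectral condition beyond $E(\lambda;k_+)\neq 0$ is invoked; the whole argument after Theorem~\ref{transfer2} is a matching of two real boundary data against two real free parameters, with the $u_1\neq 0$ dichotomy above being the only non-algebraic point worth checking.
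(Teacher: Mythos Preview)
Your proof is correct and follows essentially the same approach as the paper's: invoke Theorem~\ref{transfer2} to obtain an $l^2$ subordinate solution (fixing $u_n$ for $n\geq 2$), split into the cases $u_2\neq 0$ and $u_2=0$, and use the two free parameters $q_1,q_2$ to enforce the two remaining boundary equations. The only cosmetic difference is the order of elimination in the case $u_2\neq 0$: you first choose $q_2$ to make $u_1\neq 0$ and then solve the first-row condition for $q_1$, whereas the paper first picks $q_1\neq\lambda-b_1$ freely, sets $u_1=-a_1u_2/(q_1+b_1-\lambda)$ from the first-row condition, and then solves for $q_2$.
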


\begin{remark}
Similar results can proved if one of the other quantisation conditions,~\eqref{4.30}, is satisfied.
\end{remark}
\begin{proof} Theorem~\ref{transfer2} gives $(u_n),(q_n)$ such that $$a_{n-1}u_{n-1}+a_nu_{n+1}+(q_n+b_n-\lambda)u_n=0$$ for $n\geq 3$. There are two cases: \begin{enumerate}
\item
If $u_2\neq 0$ then defining $q_2:=\frac{-\lambda u_2-a_2u_3-a_1u_1-b_2u_2}{u_2}$ with $u_1:=-\frac{a_1u_2}{q_1+b_1-\lambda}$, with $q_1$ as a free
parameter and not equal to $\lambda-b_1$, ensures all conditions are satisfied.
\item
If $u_2=0$ then defining $u_1:=-\frac{a_2u_3}{a_1}$ and $q_1:=\lambda-b_1$, with $q_2$ as a free parameter, ensures all conditions are
satisfied.
\end{enumerate}
See the proof of Theorem~7.1 in \cite{21} for more details.~\qedhere
\end{proof}

\section{Multiple subordinate solutions}\label{transfersec6}
Here we extend Theorem~\ref{transfer2} to construct subordinate solutions for a (possibly infinite) set of spectral parameters belonging to the generalised interior of the essential spectrum.
\begin{thm}\label{transfer3}
 Let $S\subseteq \mathbb{N}$ and $(\lambda_i)_{i\in S}$ be a sequence of numbers belonging to $\sigma_{ell}(J_T)$. Assume $\theta(\lambda_i)\neq \frac{\pi}{2}$ and $E(\lambda_i)\neq 0$ for all $i\in S$, with \begin{equation}\label{4.92}q_n^{(i)}:=\frac{\sin(n\omega_i+\phi_i)}{n},\end{equation} where $\omega_i$ is such that $T\omega_i+2\theta(\lambda_i)=2k^{(i)}_+\pi,$ for a suitably chosen integer $k_+^{(i)}$. Then, there exists a real strictly positive sequence $(c_i)_{i\in S}$ belonging to $l^1(\mathbb{N})$ such that for the potential, $(q_n)$,  \begin{equation}\label{mult1}q_n:=\sum\limits_{i\in S} c_iq_n^{(i)}=\sum\limits_{i\in S}\frac{c_i\sin(n\omega_i+\phi_i)}{n},\end{equation} for arbitrary $\phi_i\in\mathbb{R}$, there are subordinate solutions, $\underline{u}^{(i)}:=\left(u_n^{(i)}\right)_{n\geq 1}$, to the recurrence equations $$a_{n-1}u_{n-1}^{(i)}+(b_n+q_n)u_n^{(i)}+a_{n}u_{n+1}^{(i)}=\lambda_iu_n^{(i)}, n\geq 2, i\in S.$$
 \end{thm}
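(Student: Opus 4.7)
The approach is to iterate the construction from Theorem~\ref{transfer2}, relying on the fact that, at each fixed $\lambda_i\in S$, only the $i$-th summand $c_iq_n^{(i)}$ of the combined potential $q_n=\sum_{l\in S}c_lq_n^{(l)}$ is in resonance: every other summand produces non-resonant oscillatory contributions that can be stripped out by an additional Harris-Lutz transformation, in the spirit of the proof of Theorem~\ref{4.44}.

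First, I would fix the positive sequence $(c_l)_{l\in S}$ to decay rapidly enough that, for every pair $i\neq l$ in $S$, the quantities $c_l\bigl|1-e^{i(T\omega_l+2\theta(\lambda_i))}\bigr|^{-1}$ and $c_l\bigl|1-e^{i(T\omega_l-2\theta(\lambda_i))}\bigr|^{-1}$ are summable in $l$ for each fixed $i$, while simultaneously $\sum_lc_l<\infty$. A diagonal construction (determining $c_l$ only after the data for $\lambda_1,\dots,\lambda_l$ and $\omega_1,\dots,\omega_l$ are known) suffices. Here one uses that for $l\neq i$ the quantisation denominators are nonzero and that, by excluding a discrete bad set when picking $k_+^{(l)}$, we may further arrange the non-cross-resonance condition $\theta(\lambda_i)+\theta(\lambda_l)\notin\pi\mathbb{Z}$ for all $i\neq l$ (so that neither $T\omega_l+2\theta(\lambda_i)$ nor $T\omega_l-2\theta(\lambda_i)$ lies in $2\pi\mathbb{Z}$).

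With $(c_l)$ thus fixed, for each $\lambda_i\in S$ I would run the reduction of Sections~\ref{transfersec1}--\ref{transfersec3}. Variation of parameters and diagonalisation turn the perturbed system at $\lambda_i$ into a recurrence for $\vec{g}_k$ of the form $\vec{g}_{k+1}=(I+M_k^{(i)}+O(1/k^2))\vec{g}_k$, where $M_k^{(i)}=O(1/k)$ is a $c_l$-weighted sum over $l\in S$ of matrices with exactly the structure from the single-frequency case, depending on the combined frequencies $T\omega_l\pm2\theta(\lambda_i)$. The $O(1/k^2)$ error bookkeeping remains valid because $|q_n|\leq(\sum_lc_l)/n$ by summability. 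A Harris-Lutz transformation, whose antiderivative converges by Proposition~\ref{zyg} together with the summability arranged above, removes all diagonal $T_1$-type pieces (from every $l$) and all off-diagonal $T_2$-type pieces coming from $l\neq i$. What remains is precisely the resonant Case 1 equation of Section~\ref{transfersec5} with coupling $c_i$; its analysis yields a subordinate solution $\underline{u}^{(i)}$ with leading decay $k^{-|c_iE(\lambda_i)/\sin\theta(\lambda_i)|}$, genuinely decaying since $c_i>0$ and $E(\lambda_i)\neq0$.

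The main obstacle is the uniform control of the Harris-Lutz transformation when the summation over $l$ is infinite: the Diophantine denominators from Proposition~\ref{zyg} may worsen unpredictably as $l$ grows, so mere summability of $(c_l)$ is not enough; the $c_l$ must be tuned to dominate these denominators for each of the countably many indices $i\in S$ simultaneously. The diagonal construction above handles this, and is exactly where the flexibility in the theorem that $(c_i)$ is asserted to exist rather than being prescribed a priori is essential.
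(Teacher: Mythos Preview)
Your overall architecture---extend the single-frequency reduction of Sections~\ref{transfersec1}--\ref{transfersec3} to the combined potential, control the Harris--Lutz tails by making $(c_l)$ decay fast enough against the oscillatory denominators, and isolate the resonant piece at each $\lambda_i$---matches the paper's. The diagonal/greedy choice of $c_l$ is essentially what the paper encodes through the sequence $b_k$ in~\eqref{4.90} and condition~\eqref{4.91}.

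There is, however, a genuine gap. You assert that ``by excluding a discrete bad set when picking $k_+^{(l)}$'' one can arrange $\theta(\lambda_i)+\theta(\lambda_l)\notin\pi\mathbb{Z}$ for $i\neq l$, so that only the $i$-th summand resonates at $\lambda_i$. This is false: substituting $T\omega_l=2k_+^{(l)}\pi-2\theta(\lambda_l)$ into $T\omega_l\pm 2\theta(\lambda_i)\in 2\pi\mathbb{Z}$ shows the integer $k_+^{(l)}$ drops out entirely, leaving the conditions $\theta(\lambda_i)=\theta(\lambda_l)$ or $\theta(\lambda_i)+\theta(\lambda_l)=\pi$. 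These depend only on the given data $(\lambda_i)_{i\in S}$, over which the theorem gives you no control. Distinct $\lambda_i,\lambda_l$ in different bands can share a quasi-momentum, or have quasi-momenta summing to $\pi$, and then cross-resonance is unavoidable; your Harris--Lutz step would diverge on exactly those $l$.

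The paper confronts this head-on (Lemma~\ref{4.85}): cross-resonance defines an equivalence relation on $(\lambda_i)_{i\in S}$ with classes of size at most $2T$, so at each $\lambda_t$ only finitely many summands survive the Harris--Lutz reduction. The resulting resonant coefficient is then a finite linear combination $Y(\lambda_t)$ of the $c_l$ in that class, and one must still check it is nonzero. This requires a further perturbation argument (Lemma~\ref{4.86}) adjusting the $c_l$ within each class so that none of the $Y(\lambda_t)$ vanish, while preserving the summability conditions. Your proposal misses both ingredients; without them the argument does not go through for general $(\lambda_i)_{i\in S}$ as stated.
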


\begin{remark}
The reader should observe that there is no rational dependence condition between the $\theta(\lambda),\lambda\in(\lambda_i)_{i\in S}$, like in some results (see, for example, Theorem~1 in \cite{9g}). Indeed, our only constraint is that $E(\lambda_i)\neq 0$ and since the function is algebraic there are only finitely many roots and therefore finitely many points in the elliptic spectrum where the technique fails. (For the periods $T=1$ and $T=2$ we have seen that the function $E(\lambda)$ has no roots in $\sigma_{ell}(J_T)$ and therefore there are no restrictions for these two cases.) Moreover, the frequency, $\omega_i$ used to define the potential has no dependency on any other $\lambda_i$ than that with which it satisfies the resonance conditions.
\end{remark}

\begin{remark}
To simplify notation, unless explicitly mentioned we will assume $S=\mathbb{N}$ as this is the most general and interesting case. All other cases can be proven in the same way. Later, in Theorem~\ref{4.27}, for the case $S=\{1,2\}$, we deal with the initial conditions and establish explicit $\underline{u}^{(i)}\in l^2,(q_n)$ such that $$(J_T+Q)\underline{u}^{(i)}=\lambda_i\underline{u}^{(i)}$$ for each $i\in\{1,2\}$ and where $Q$ is a diagonal matrix with entries $q_n$.
\end{remark}

  The aim is to consider an arbitrary $\lambda_t\in(\lambda_i)_{i\in S}$ and show that the new perturbation, $(q_n)$, still produces a subordinate solution for $\lambda_t$. Note that each $\lambda_i$ will now be associated to an eigenvalue, $\mu(\lambda_i)$, of the monodromy matrix, where $\mu(\lambda_{i})=e^{i\theta(\lambda_i)}$. Moreover, since the explicit nature of $(q_n)$ in the single eigenvalue case is not discussed until the section dealing with the Harris-Lutz procedure in the proof of Theorem~\ref{transfer2}, this means that the results of Sections~\ref{transfersec1} and \ref{transfersec2} are still applicable here. Moreover, we see that by choosing $(c_l)_{l\in S}$ such that $\sum\limits_{l=1}^\infty c_l<\infty$ then \begin{align*} |q_n|=\left|\sum\limits_{l=1}^\infty c_l \frac{\sin(n\omega_l+\phi_l)}{n}\right|\leq \sum\limits_{l=1}^\infty \frac{c_l}{n}=O\left(\frac{1}{n}\right). \end{align*} The details at the end of Section~\ref{transfersec3} follow similarly to before except now we must use a more detailed version of Proposition~\ref{zyg}:

  \begin{lemma}\label{4.73}
Let $\alpha\in\mathbb{R}, \alpha\not\in 2\pi\mathbb{Z}$. Then $$\left|\sum\limits_{k=n}^\infty \frac{e^{ik\alpha}}{k}\right|\leq \frac{2}{n\left|e^{i\alpha}-1\right|}= \frac{1}{n\left|\sin(\frac{\alpha}{2})\right|}.$$
\end{lemma}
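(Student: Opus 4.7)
My plan is to use Abel's summation (summation by parts) together with the uniform boundedness of partial geometric sums. The starting observation is that, for $\alpha\notin 2\pi\mathbb{Z}$, the truncated exponential sum
\[
A_N := \sum_{k=n}^{N} e^{ik\alpha} = e^{in\alpha}\,\frac{1-e^{i(N-n+1)\alpha}}{1-e^{i\alpha}}
\]
satisfies the uniform bound $|A_N|\leq 2/|1-e^{i\alpha}|$ in $N$, since the numerator has modulus at most $2$. This is the only place where the hypothesis $\alpha\notin 2\pi\mathbb{Z}$ is used.

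Next, I would apply Abel's transformation to separate the rapidly oscillating part from the slowly varying factor $1/k$, yielding the identity
\[
\sum_{k=n}^{N} \frac{e^{ik\alpha}}{k} = \frac{A_N}{N} + \sum_{k=n}^{N-1} A_k\left(\frac{1}{k}-\frac{1}{k+1}\right).
\]
Since $|A_N|$ is uniformly bounded and $1/N\to 0$, the boundary term vanishes as $N\to\infty$, and the remaining sum converges absolutely, being dominated by the telescoping series $\sum_{k=n}^\infty (1/k-1/(k+1)) = 1/n$.

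Passing to the limit $N\to\infty$ and estimating term by term with the uniform bound on $|A_k|$ gives
\[
\left|\sum_{k=n}^\infty \frac{e^{ik\alpha}}{k}\right| \leq \frac{2}{|1-e^{i\alpha}|}\sum_{k=n}^\infty\left(\frac{1}{k}-\frac{1}{k+1}\right) = \frac{2}{n|1-e^{i\alpha}|},
\]
which is the first inequality claimed. The stated equality follows from the standard trigonometric identity $|1-e^{i\alpha}| = 2|\sin(\alpha/2)|$.

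There is no substantial obstacle: this is a classical Dirichlet-test style estimate. The only point requiring a moment's care is verifying that the boundary term in Abel's transformation vanishes, but this is immediate from the explicit geometric-sum bound established in the first step.
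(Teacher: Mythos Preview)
Your proof is correct but follows a different line from the paper's. The paper does not use Abel summation; instead it writes $\widetilde{g}_n:=\sum_{k=n}^\infty e^{ik\alpha}/k$ (appealing to Proposition~\ref{zyg} for existence), multiplies by $e^{i\alpha}$, and splits $\widetilde{g}_n e^{i\alpha}=\widetilde{g}_{n+1}+\sigma_n$ with $\sigma_n=\sum_{k=n}^\infty e^{i(k+1)\alpha}/(k(k+1))$, so that $(e^{i\alpha}-1)\widetilde{g}_n=-e^{in\alpha}/n+\sigma_n$ and the bound follows from $|\sigma_n|\le 1/n$. Your Dirichlet-test argument via the uniform bound on the truncated geometric sums and the telescoping of $1/k-1/(k+1)$ is the more classical route; it has the minor advantage of being self-contained, since it establishes convergence of the tail as a by-product rather than invoking an external reference. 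Both arguments are elementary and yield the identical constant $2/(n|e^{i\alpha}-1|)$.
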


\begin{proof}
Define $\widetilde{g}_n:=\sum\limits_{k=n}^\infty \frac{e^{ik\alpha}}{k}$, which exists by Proposition~\ref{zyg}. Consequently,
\begin{equation}\label{4.72}
\widetilde{g}_ne^{i\alpha}=\sum\limits_{k=n}^\infty \frac{e^{i(k+1)\alpha}}{k}=\sum\limits_{k=n}^\infty \frac{e^{i(k+1)\alpha}}{k+1}+\sum\limits_{k=n}^\infty \frac{e^{i(k+1)\alpha}}{k(k+1)}=\widetilde{g}_{n+1}+\sigma_n,
\end{equation} where $\sigma_n:=\sum\limits_{k=n}^\infty \frac{e^{i(k+1)\alpha}}{k(k+1)}$ and $|\sigma_n|\leq \frac{1}{n}.$ Then, by~\eqref{4.72} \begin{equation*}(e^{i\alpha}-1)\widetilde{g}_n=\widetilde{g}_{n+1}-\widetilde{g}_n+\sigma_n=-\frac{e^{in\alpha}}{n}+\sigma_n,
\end{equation*} which implies $|e^{i\alpha}-1| |\widetilde{g}_n|\leq \frac{2}{n}.$ Thus $$|\widetilde{g}_n|=\left|\sum\limits_{k=n}^\infty \frac{e^{ik\alpha}}{k}\right|\leq \frac{2}{n\left|e^{i\alpha}-1\right|}.~\qedhere$$
\end{proof}
This gives the following corollary which will be used repeatedly throughout this section of the paper.

\begin{cor}\label{4.88}
Let $\alpha\in\mathbb{R}, \alpha\not\in 2\pi\mathbb{Z}$. Then for $n_1,n_2\in\mathbb{N}, n_2>n_1$
$$\left|\sum\limits_{k=n_1}^{n_2} \frac{e^{ik\alpha}}{k}\right|\leq\left(\frac{1}{n_1}+\frac{1}{n_2}\right)\frac{1}{|\sin\left(\frac{\alpha}{2}\right)|}.$$

\end{cor}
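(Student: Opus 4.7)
The plan is to deduce Corollary~\ref{4.88} directly from Lemma~\ref{4.73} by a simple telescoping trick. Observe that the finite partial sum can be rewritten as the difference of two convergent tails:
$$\sum_{k=n_1}^{n_2} \frac{e^{ik\alpha}}{k} \;=\; \sum_{k=n_1}^{\infty} \frac{e^{ik\alpha}}{k} \;-\; \sum_{k=n_2+1}^{\infty} \frac{e^{ik\alpha}}{k}.$$
Both tails on the right-hand side converge (and are quantitatively controlled) under the standing hypothesis $\alpha\not\in 2\pi\mathbb{Z}$, by Lemma~\ref{4.73} itself (or, for mere convergence, Proposition~\ref{zyg}).

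Next, I would apply the triangle inequality to the above decomposition and then invoke the estimate of Lemma~\ref{4.73} separately to each of the two tails, starting at index $n_1$ and $n_2+1$ respectively. This yields
$$\left|\sum_{k=n_1}^{n_2} \frac{e^{ik\alpha}}{k}\right| \;\le\; \frac{1}{n_1\,|\sin(\alpha/2)|} \;+\; \frac{1}{(n_2+1)\,|\sin(\alpha/2)|}.$$
Finally, using the trivial monotonicity bound $\tfrac{1}{n_2+1}\le \tfrac{1}{n_2}$ produces the stated inequality. There is essentially no obstacle to navigate: the corollary is an immediate consequence of the preceding lemma via the additive splitting of the finite range into two infinite tails plus one elementary monotonicity step. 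Its role, I expect, is purely practical, namely to furnish a convenient two-sided truncation estimate that can be invoked repeatedly in the forthcoming multi-subordinate-solution construction where one must control Abel-type sums over varying finite windows and over each of the countably many frequencies $\omega_i$ simultaneously.
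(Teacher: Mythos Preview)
Your proposal is correct and follows essentially the same approach as the paper: split the finite sum into the difference of two infinite tails, apply the triangle inequality, and bound each tail via Lemma~\ref{4.73}. The paper's proof is identical in structure, except it passes directly to the bound $\tfrac{1}{n_2|\sin(\alpha/2)|}$ for the second tail (implicitly absorbing the monotonicity step $\tfrac{1}{n_2+1}\le\tfrac{1}{n_2}$ that you make explicit).
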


\begin{proof}
Observe that \begin{align*}
\left|\sum\limits_{k=n_1}^{n_2}\frac{e^{ik\alpha}}{k}\right|&=\left|\sum\limits_{k=n_1}^\infty \frac{e^{ik\alpha}}{k}-\sum\limits_{k'=n_2+1}^\infty \frac{e^{ik'\alpha}}{k'}\right|\leq \left|\sum\limits_{k=n_1}^\infty \frac{e^{ik\alpha}}{k}\right|+\left|\sum\limits_{k'=n_2+1}^\infty \frac{e^{ik'\alpha}}{k'}\right|\\
&\leq \frac{1}{n_1\left|\sin\left(\frac{\alpha}{2}\right)\right|}+\frac{1}{n_2\left|\sin\left(\frac{\alpha}{2}\right)\right|}=\left(\frac{1}{n_1}+\frac{1}{n_2}\right)\frac{1}{|\sin\left(\frac{\alpha}{2}\right)|},
\end{align*} where the final inequality is a consequence of Lemma~\ref{4.73}.~\qedhere
\end{proof}

To apply the Harris-Lutz transformation in this case we define $\vec{h}_k$ such that $\vec{g}_k=(I+G_k)\vec{h}_k$ with $\|G_k\|=O\left(\frac{1}{k}\right)$. From the analogue of \eqref{4.24} this gives \begin{equation}\vec{h}_{k+1}=\left(I-G_{k+1}+G_k+T_1(k)+T_2(k)+O\left(\frac{1}{k^2}\right)\right)\vec{h}_k,\end{equation} where  \begin{equation}T_1(k):=\frac{1}{i\sin\theta(\lambda_t)}\sum\limits_{l=1}^\infty\sum\limits_{j=0}^{T-1} \frac{c_lq^{(l)}_{(k+1)T-j}}{a_{T-j}}\left(\begin{array}{cc}
-\overline{C_j(\lambda_t)}&0\\
0&C_j(\lambda_t)\end{array}\right),\end{equation}\begin{equation} T_2(k):=\frac{1}{i\sin\theta(\lambda_t)}\sum\limits_{l=1}^\infty \sum\limits_{j=0}^{T-1} \frac{c_lq^{(l)}_{(k+1)T-j}}{a_{T-j}}\left(\begin{array}{cc}
0&-\overline{D_j(\lambda_t)}~\overline{\mu(\lambda_t)}^{2k}\\
D_j(\lambda_t)\mu(\lambda_t)^{2k}&0\end{array}\right)\end{equation} with $C_j(\lambda_t), D_j(\lambda_t)$ as defined in \eqref{4.74} and \eqref{4.66}. Then $T_1(k),T_2(k)=O\left(\frac{1}{k}\right)$ due to the condition that $(c_l)_{l\in S}\in l^1$. In addition, define $G^N_k:=-\sum\limits_{m=k}^N T_1(m)$. Consequently, for $N_1,N_2$ large enough with $N_2>N_1>k$ we have \begin{align}
&\left\|G_k^{N_2}-G_k^{N_1}\right\|=\left\|\sum\limits_{m=N_1}^{N_2}T_1(m)\right\|=\left|\sum\limits_{m=N_1}^{N_2}\frac{1}{\sin\theta(\lambda_t)}\sum\limits_{l=1}^\infty \sum\limits_{j=0}^{T-1} c_l \frac{q_{(m+1)T-j}^{(l)}}{a_{T-j}} C_j(\lambda_t)\right|\nonumber\\
&=\frac{1}{|\sin\theta(\lambda_t)|}\left|\sum\limits_{l=1}^\infty c_l\sum\limits_{j=0}^{T-1} \frac{C_j(\lambda_t)}{a_{T-j}}\sum\limits_{m=N_1}^{N_2}\frac{\sin(((m+1)T-j)\omega_l+\phi_l)}{(m+1)T-j}\right|.\nonumber
\end{align} Then, using that $$\frac{1}{(m+1)T-j}\leq\frac{1}{(m+1)T}+\frac{j}{m^2T^2}$$ and $$\sum\limits_{m=N_1}^{N_2} \frac{1}{m^2}\leq \sum\limits_{m=N_1}^\infty \frac{1}{m^2}\leq \frac{1}{N_1},$$ we obtain
\begin{align}
&\left\|G_k^{N_2}-G_k^{N_1}\right\|\leq \frac{K_t}{N_1}+\frac{1}{|\sin\theta(\lambda_t)|}\sum\limits_{l=1}^\infty c_l\left|\sum\limits_{j=0}^{T-1} \frac{C_j(\lambda_t)}{a_{T-j}}\im\sum\limits_{m=N_1}^{N_2}\frac{e^{i(((m+1)T-j)\omega_l+\phi_l)}}{(m+1)T}\right|\nonumber\\
&\leq \frac{K_t}{N_1}+\widetilde{K}_t\sum\limits_{l=1}^\infty c_l\left|\sum\limits_{m'=N_1+1}^{N_2+1} \frac{e^{im'T\omega_l}}{m'}\right|\leq \frac{K_t}{N_1}+\frac{2\widetilde{K}_t}{TN_1}\sum\limits_{l=1}^\infty \frac{c_l}{\left|\sin\left(\frac{T\omega_l}{2}\right)\right|},\label{4.80}
\end{align} where $K_t:=T\widetilde{K}_t\sum\limits_{l=1}^\infty c_l,~\widetilde{K}_t:=\frac{\max\limits_{j\in\{0,\dots,T-1\}}\left| C_j(\lambda_t)\right|}{|\sin\theta(\lambda_t)|\min\limits_{j\in\{0,\dots,T-1\}} a_j}$, and the final inequality follows from Corollary~\ref{4.88} and the strictly positive sequence $(c_l)_{l\in S}$ being chosen such that \begin{equation}\label{4.89}\sum\limits_{l=1}^\infty \frac{c_l}{|\sin\left(\frac{T\omega_l}{2}\right)|}<\infty.\end{equation} Clearly, $\left(G_k^N\right)_N$ is a Cauchy-sequence  and therefore we have that the limit $G_k:=\lim_{N\rightarrow\infty} G^N_k$ exists.

For $N$ sufficiently large we can employ similar techniques to establish that $$\left\|G_k^N\right\|\leq \frac{K_t}{k}+\frac{2\widetilde{K}_t}{Tk}\sum\limits_{l=1}^\infty c_l \frac{1}{\left|\sin\left(\frac{T\omega_l}{2}\right)\right|}=O\left(\frac{1}{k}\right)$$ is uniformly bounded in $N$ for any value of the parameter $t$. Consequently, the limit $G_k$ is also bounded for any value of the parameter $t$, providing the strictly positive sequence $(c_l)_{l\in S}$ is chosen to decay fast enough.

\begin{remark}
It should be stressed that any real strictly positive sequence which satisfies \eqref{4.89} will suffice to make the above Harris-Lutz procedure valid.
\end{remark}

The Harris-Lutz procedure is well-defined and we remove the $T_1(k)$ term like in the single eigenvalue case (i.e. $G_{k+1}-G_k=T_1(k)$). The analogue of Section~\ref{transfersec4} becomes

 \begin{align}
&\vec{h}_{k+1}=\left(I+T_2(k)+O\left(\frac{1}{k^2}\right)\right)\vec{h}_k\nonumber\\
&=\bigg(I+\frac{1}{i\sin\theta(\lambda_t)}\sum\limits_{j=0}^{T-1}\sum\limits_{l=1}^\infty\frac{{c_l}\sin({((k+1)T-j)\omega_l+\phi_l})}{a_{T-j}((k+1)T-j)}\nonumber\\
&\times\left(\begin{smallmatrix}
0&-\overline{D_j(\lambda_t)}~\overline{\mu(\lambda_t)}^{2k}\\
D_j(\lambda_t)\mu^{2k}(\lambda_t)&0\end{smallmatrix}\right)+O\left(\frac{1}{k^2}\right)\bigg)\vec{h}_k\nonumber\\
&=\bigg(I+\frac{1}{i\sin\theta(\lambda_t)}\sum\limits_{j=0}^{T-1}\sum\limits_{l=1}^\infty\frac{{c_l}e^{i(((k+1)T-j)\omega_l+\phi_l)}-{c_l}e^{-i(((k+1)T+j)\omega_l+\phi_l)}}{2ia_{T-j}((k+1)T+j)}\nonumber\\
&~~~~~~~~~~\times\left(\begin{smallmatrix}
0&-\overline{D_j(\lambda_t)}\overline{\mu}^{2k}(\lambda_t)\\
D_j(\lambda_t)\mu^{2k}(\lambda_t)&0\end{smallmatrix}\right)+O\left(\frac{1}{k^2}\right)\bigg)\vec{h}_k\nonumber\\
&=\bigg(I+\frac{1}{(k+1)\sin\theta(\lambda_t)}\sum\limits_{l=1}^{\infty}c_l\bigg\{\left(\begin{smallmatrix}
0&-\overline{d_l(\lambda_t)}+\overline{f_l(\lambda_t)}\\
-d_l(\lambda_t)+f_l(\lambda_t)&0\end{smallmatrix}\right)+O\left(\frac{1}{k^2}\right)\bigg)\vec{h}_k\label{4.53}
\end{align} where $$d_l(\lambda_t):= e^{ik[2\theta(\lambda_t)+T\omega_l]}\sum\limits_{j=0}^{T-1}E_j(\lambda_t)e^{i((T-j)\omega_l+\phi_l)}$$ and $$f_l(\lambda_t):=e^{-ik[T\omega_l-2\theta(\lambda_t)]}\sum\limits_{j=0}^{T-1}{E}_j(\lambda_t)e^{-i((T-j)\omega_l+\phi_l)}$$ with $E_j(\lambda_t)$ as defined previously.

 To establish that the perturbation affects the asymptotics, we observe that for $l=t$ there is resonance between the frequency of the oscillation and the quasi-momentum. In particular, for a suitable choice of $k_+^{(i)}$ (see Section~\ref{transfersec5}) we have $$c_t\sum\limits_{j=0}^{T-1} E_j(\lambda_t)e^{i((T-j)\omega_l+\phi_l)}=c_tE(\lambda_t),$$ which is non-zero by the conditions assumed in the statement of the theorem. Furthermore, it is possible for other resonance to occur when $l\neq t$ and this is discussed in the following lemma.

 \begin{lemma}\label{4.85}
Let $\lambda_\alpha\sim\lambda_\beta$ denote when $2\theta(\lambda_\alpha)+T\omega_{\beta}\in2\pi\mathbb{Z}$ or $2\theta(\lambda_\alpha)-T\omega_{\beta}\in2\pi\mathbb{Z}$ for $\lambda_\alpha,\lambda_\beta$ from the set $(\lambda_i)_{i\in S}$ given in Theorem~\ref{transfer3} and where $\omega_i$ is such that $2\theta(\lambda_i)+T\omega_i\in 2\pi\mathbb{Z}$. Then $\sim$ is an equivalence relation; in particular, the set $\left\{\lambda_i~|~i\in S\right\}$ can be partitioned into equivalence classes. Moreover, each class has at most $2T$ elements.
\end{lemma}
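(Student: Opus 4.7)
The plan is to reduce the definition of $\sim$ to a transparent condition on the quasi-momenta $\theta(\lambda_i)$ modulo $\pi$, and then read off both the equivalence-relation axioms and the cardinality bound from that cleaner form.

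First I would substitute the defining property $T\omega_i \equiv -2\theta(\lambda_i) \pmod{2\pi}$ into the two alternatives appearing in the definition of $\sim$. The first alternative $2\theta(\lambda_\alpha)+T\omega_\beta \in 2\pi\mathbb{Z}$ becomes $\theta(\lambda_\alpha)-\theta(\lambda_\beta)\in\pi\mathbb{Z}$, while the second alternative $2\theta(\lambda_\alpha)-T\omega_\beta\in 2\pi\mathbb{Z}$ becomes $\theta(\lambda_\alpha)+\theta(\lambda_\beta)\in\pi\mathbb{Z}$. Thus $\lambda_\alpha \sim \lambda_\beta$ is equivalent to the manifestly symmetric statement $\theta(\lambda_\alpha)\equiv\pm\theta(\lambda_\beta)\pmod{\pi}$.

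From this reformulation the three axioms are immediate. Reflexivity follows by choosing the $+$ sign. Symmetry follows because the combined condition is symmetric in $\alpha,\beta$. Transitivity follows by composing signs: if $\theta(\lambda_\alpha)\equiv\epsilon_1\theta(\lambda_\beta)\pmod\pi$ and $\theta(\lambda_\beta)\equiv\epsilon_2\theta(\lambda_\gamma)\pmod\pi$ with $\epsilon_j\in\{\pm 1\}$, then $\theta(\lambda_\alpha)\equiv\epsilon_1\epsilon_2\theta(\lambda_\gamma)\pmod\pi$, so $\lambda_\alpha\sim\lambda_\gamma$.

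For the cardinality bound I would exploit the ellipticity of the $\lambda_i$: since each $\lambda_i\in\sigma_{ell}(J_T)$, one has $\theta(\lambda_i)\in(0,\pi)$ (as already noted for elliptic points in Section~1). Under this restriction the modular relation collapses to the concrete dichotomy $\theta(\lambda_\beta)=\theta(\lambda_\alpha)$ or $\theta(\lambda_\beta)=\pi-\theta(\lambda_\alpha)$. For any fixed $\theta_0\in(0,\pi)$, every $\lambda$ with $\theta(\lambda)=\theta_0$ satisfies $\Tr M(\lambda)=2\cos\theta_0$, and since $\lambda\mapsto\Tr M(\lambda)$ is a real polynomial of degree $T$, this equation has at most $T$ real solutions. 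The same bound applies to $\pi-\theta_0$, so an equivalence class contains at most $T+T=2T$ elements (dropping to $T$ in the degenerate subcase $\theta_0=\pi/2$). The main obstacle is purely bookkeeping of the $\pm$ signs and the $2\pi\mathbb{Z}$ versus $\pi\mathbb{Z}$ distinction; the structural input is simply the polynomial character of $\Tr M(\lambda)$.
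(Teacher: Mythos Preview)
Your reduction of $\sim$ to the condition $\theta(\lambda_\alpha)\equiv\pm\theta(\lambda_\beta)\pmod{\pi}$, and the subsequent verification of the equivalence-relation axioms, is exactly what the paper does (the paper is terser and simply declares the relation ``clearly an equivalence relation'' after reaching the dichotomy $\theta(\lambda_\alpha)=\theta(\lambda_\beta)$ or $\theta(\lambda_\alpha)+\theta(\lambda_\beta)=\pi$).

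For the bound $|[\lambda_\alpha]|\le 2T$ you take a genuinely different route. The paper argues band-by-band: it invokes the strict monotonicity of $\theta$ on each band of $\sigma_{ell}(J_T)$ (citing an external reference) to conclude that each of the at most $T$ bands contributes at most two elements to a class. You instead argue globally via the trace: $\theta(\lambda)=\theta_0$ forces $\Tr M(\lambda)=2\cos\theta_0$, a degree-$T$ polynomial equation with at most $T$ real roots, and likewise for $\pi-\theta_0$. Your argument is more self-contained, needing only the degree of $\Tr M(\lambda)$ rather than an imported monotonicity statement; the paper's argument, on the other hand, yields the slightly finer information that each band contributes at most two members, which is what underlies their Figure~1. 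Both are correct and give the same bound.
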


\begin{proof}
First observe that \begin{align*}\lambda_\alpha\sim\lambda_\beta&\iff 2\theta(\lambda_\alpha)+ T\omega_\beta\in 2\pi\mathbb{Z}~{\rm or}~2\theta(\lambda_\alpha)-T\omega_\beta\in 2\pi\mathbb{Z}\\
&\iff \theta(\lambda_\alpha)-\theta(\lambda_\beta)\in\pi \mathbb{Z}~{\rm or}~\theta(\lambda_\alpha)+\theta(\lambda_\beta)\in\pi \mathbb{Z}, \end{align*} using that $T\omega_i=2z_1\pi-2\theta(\lambda_i)$ for some $z_1\in\mathbb{Z}$ by the conditions assumed in the theorem. As $\theta(\lambda)\in(0,\pi)$, this implies $$\lambda_\alpha\sim\lambda_\beta \iff \theta(\lambda_\alpha)-\theta(\lambda_\beta)=0~{\rm or}~\theta(\lambda_\alpha)+\theta(\lambda_\beta)=\pi,$$ which is clearly an equivalence relation. Moreover, as $\theta(\lambda)$ is strictly monotonic on each band of $\sigma_{ell}(J_T)$ (see, for example, 3.10 in \cite{7.40}) each band of essential spectrum contributes at most 2 elements to the equivalence class. Given that there are at most $T$ bands of essential spectrum for a period-$T$ Jacobi operator, then there at most $2T$ elements for the equivalence class in total. See Figure~\ref{4.87} for an illustration.\qedhere
\end{proof}

\begin{figure}[h]
\centering
\begin{tikzpicture}
        \draw [ultra thick, black, -] (-5,0) -- (-2,0)
         node [thick, right, red] at (-3,0) {\large$X$}
         node [thick, left, blue] at (-4,0) {\large$\widetilde{X}$};
         \draw[ultra thick, black, -] (-1,0) -- (2,0)
         node [thick, left, red] at (0,0) {\large$X$}
         node [thick, right, blue] at (1,0) {\large$\widetilde{X}$};
            \draw[ultra thick, black, -] (3,0) -- (6,0)
            node [thick, right,red ] at (5,0) {\large$X$}
            node [thick, left, blue] at (4,0) {\large$\widetilde{X}$};
\end{tikzpicture}
\caption{In this example, for a period-3 Jacobi operator, the three thick horizontal lines denote the three bands of essential spectrum. The possible resonant cases, $\lambda_i$, for a particular point $\lambda$ in the generalised interior are represented by $X$ and $\widetilde{X}$, for those $\lambda_i$ such that $\theta(\lambda_i)=\theta(\lambda)$, and $\theta(\lambda_i)=\pi-\theta(\lambda)$, respectively. Note that the six points are only candidate elements of the equivalence class, since it still remains to check for each whether it is also an element of the sequence $(\lambda_i)_{i\in S}$. }
\label{4.87}
\end{figure}
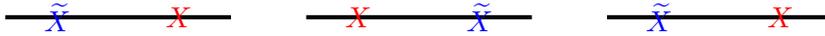

From Lemma~\ref{4.85} we see there are only finitely many resonating terms for each fixed $t$. Later, it will be shown that for an appropriate choice of $(c_j)_{j\in S}$ these finitely many resonance terms cannot cancel, but for now we focus on removing the infinitely many non-resonant terms from the consideration of the asymptotics for \eqref{4.53} using the Harris-Lutz technique.

Define, for each $t\in\mathbb{N}$, $$I_t^+:=\{n\in\mathbb{N} |2\theta(\lambda_t)+T\omega_n \in 2\pi\mathbb{Z}\},~~I_t^-:=\{{n}\in\mathbb{N} | 2\theta(\lambda_t)-T\omega_{{n}}\in 2\pi\mathbb{Z}\}.$$ For the case that $2\theta(\lambda_t)+T\omega_t\in2\pi\mathbb{Z}$, by Corollary~\ref{4.88} we have for any $M$ $$ \left| \sum\limits_{k=1}^M \sum\limits_{\substack{l=1,\\ l\not\in I_t^+}}^\infty c_l \frac{e^{-ik[2\theta(\lambda_t)+T\omega_l]}}{k}\right|\leq\sum\limits_{\substack{l=1,\\l\not\in I_t^+}}^\infty c_l \left|\sum\limits_{k=1}^M \frac{e^{ikT[\omega_t-\omega_l]}}{k}\right|\leq 2 \sum\limits_{\substack{l=1,\\l\not\in I_t^+}}^\infty \frac{c_l}{\left|\sin\left(\frac{T\omega_t-T\omega_l}{2}\right)\right|}.$$  Thus \begin{equation} \label{4.75} \left| \sum\limits_{k=1}^\infty \sum\limits_{\substack{l=1,\\ l\not\in I_t^+}}^\infty c_l \frac{e^{-ik[2\theta(\lambda_t)+T\omega_l]}}{k}\right|\leq 2\sum\limits_{\substack{l=1,\\l\not\in I_t^+}}^\infty \frac{c_l}{\left|\sin\left(\frac{T\omega_t-T\omega_l}{2}\right)\right|}.\end{equation} Similarly
 \begin{equation}\label{4.76}\left| \sum\limits_{k=1}^\infty \sum\limits_{\substack{l=1,\\l\not\in I_t^-}}^\infty c_l  \frac{e^{ik[T\omega_l-2\theta(\lambda_t)]}}{k}\right|\leq 2\sum\limits_{\substack{l=1,\\l\not\in I_t^-}}^\infty \frac{c_l}{\left|\sin\left(\frac{T\omega_t+T\omega_l}{2}\right)\right|}.\end{equation} Now an upper-bound needs to be established for \eqref{4.75} and \eqref{4.76} that is uniform in the parameter $t$. This is achieved by defining a new positive sequence \begin{equation}\label{4.90} b_k:=\min\left\{\min\limits_{j\in\{1,\dots,k-1\}\setminus I_k^+}\left|\sin\left(\frac{T\omega_k-T\omega_j}{2}\right)\right|,\min\limits_{j\in\{1,\dots,k-1\}\setminus I_k^-}\left|\sin\left(\frac{T\omega_k+T\omega_j}{2}\right)\right|\right\},\end{equation} $k>2T$, and observing that for all $t\in\mathbb{N}$ we have
$\left|\sin\left(\frac{T\omega_t-T\omega_l}{2}\right)\right|\geq b_l,$ for $l>\max\{t,2T\}, l\not \in I_t^+$, and $\left|\sin\left(\frac{T\omega_t+T\omega_l}{2}\right)\right|\geq b_l$ for $l>\max\{t,2T\}, l\not \in I_t^-$.
  \begin{remark}
  The motivation behind letting $k>2T$ follows from the fact that the formal definition of $b_k$ requires taking the minimum over a set that is non-empty. Since there are at most $2T$ possible $j$ where resonance occurs, and these instances are excluded from our consideration, we must ensure that we are taking the minimum over a set that has more than $2T$ entries to guarantee at least one entry in the set.
  \end{remark}

  The initial terms of the series being finite, we focus on the tail and see that \begin{align}\sum\limits_{\substack{l=2T+1,\\l\not\in I_t^+}}^\infty \frac{c_l}{\left|\sin\left(\frac{T\omega_t-T\omega_l}{2}\right)\right|}\leq \sum\limits_{\substack{l=2T+1\\l\not\in I_t^+}}^\infty \frac{c_l}{b_l}, \label{4.79}\end{align}
which is convergent providing the sequence $(c_j)_{j\in S}$ is chosen such that \begin{equation}\label{4.91} \sum\limits_{l} \frac{c_l}{b_l}< \infty.\end{equation} Similarly, we have
\begin{align}\sum\limits_{\substack{l=2T+1,\\l\not\in I_t^-}}^\infty \frac{c_l}{\left|\sin\left(\frac{T\omega_t+T\omega_l}{2}\right)\right|}&\leq \sum\limits_{\substack{l=2T+1,\\l\not\in I_t^-}}^\infty \frac{c_l}{b_l},\label{4.81}\end{align} which is also finite providing $(c_j)_{j\in S}$ satisfies the same conditions as in \eqref{4.91}. If instead $2\theta(\lambda_t)-\omega_t T\in2\pi\mathbb{Z}$ we obtain similar estimates.

 It is now possible to remove the non-oscillating terms using a well-defined Harris-Lutz transformation, i.e. a substitution of the form $\vec{h}_{k+1}=(I+H_k)\vec{m}_k$ with $\left\|H_k\right\|=O\left(\frac{1}{k}\right)$, which gives $$\vec{m}_{k+1}=\left(I-H_{k+1}+H_k+T_3(k)+T_4(k)+O\left(\frac{1}{k^2}\right)\right)\vec{m}_{k+1},$$ where \begin{align*}T_3(k)&=\frac{-1}{(k+1)\sin\theta(\lambda_t)}\bigg\{\left(\begin{smallmatrix}
0&\sum\limits_{l=1, l\not\in{I_t^+}}^{\infty}c_l\overline{d_l(\lambda_t)}\\
\sum\limits_{l=1,l\not\in I_t^+}^{\infty}c_ld_l(\lambda_t)&0\end{smallmatrix}\right)\nonumber\\
&~~+\left(\begin{smallmatrix}
0&\sum\limits_{l=1, l\not\in{I_t^-}}^{\infty}c_l\overline{f_l(\lambda_t)}\\
\sum\limits_{l=1, l\not\in{I_t^-}}^{\infty}c_lf_l(\lambda_t)&0\end{smallmatrix}\right)\bigg\},\end{align*} \begin{align*}T_4(k)&=\frac{1}{(k+1)\sin\theta(\lambda_t)}\bigg\{\left(\begin{smallmatrix}
0&\sum\limits_{l\in{I_t^+}}c_l\overline{d(\lambda_t)}\\
\sum\limits_{l\in I_t^+}^{\infty}c_ld_l(\lambda_t)&0\end{smallmatrix}\right)\nonumber\\
&~~+\left(\begin{smallmatrix}
0&\sum\limits_{l\in{I_t^-}}c_l\overline{f_l(\lambda_t)}\\
\sum\limits_{l\in{I_t^-}}c_lf_l(\lambda_t)&0\end{smallmatrix}\right)\bigg\},\end{align*} and $H_k=-\sum\limits_{r=k}^\infty T_3(r)$, with \begin{align*}\left\|H_k\right\|&\leq \frac{T\max\limits_{j\in\{0,\dots,T-1\}}|E_j(\lambda_t)|}{k\sin\theta(\lambda_t)}\left(\sum\limits_{\substack{l=1,\\l\not\in I_t^+}}^\infty \frac{c_l}{\left|\sin\left(\frac{T\omega_t-T\omega_l}{2}\right)\right|} + \sum\limits_{\substack{l=1,\\l\not\in I_t^-}}^\infty \frac{c_l}{\left|\sin\left(\frac{T\omega_t+T\omega_l}{2}\right)\right|}\right)\end{align*} which by \eqref{4.79} and \eqref{4.81} is convergent for any value of the parameter $t$, providing the sequence $(c_j)_{j\in S}$, is chosen so that it satisfies \eqref{4.91} and in which case $\left\|H_k\right\|=O\left(\frac{1}{k}\right)$. Then, since $H_{k+1}-H_k=T_3(k)$, Equation \eqref{4.53} becomes \begin{equation}\label{4.82}\vec{m}_{k+1}=\left(I+T_4(k)+O\left(\frac{1}{k^2}\right)\right)\vec{m}_k.\end{equation}

It is now shown that the sequence $(c_j)_{j\in S}$ can be chosen such that the finitely many resonating terms appearing in $T_4(k)$ do not cancel. This involves the following elementary lemma.

\begin{lemma}\label{4.86}
Let $A\in \mathbb{C}^{n\times n}$ be a matrix where all the diagonal entries are non-zero. Then for any vector $\vec{f}\in\mathbb{C}^{n}$  with all entries positive there exists a vector $\vec{f}'$ arbitrarily close to $\vec{f}$ so that $A\vec{f}'=\vec{v}$ where the entries of $\vec{v}$ are all non-zero.
\end{lemma}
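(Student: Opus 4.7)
The plan is to show that the set of vectors $\vec{x}$ for which some entry of $A\vec{x}$ vanishes is nowhere dense, so that any given $\vec{f}$ can be approximated arbitrarily well by a vector $\vec{f}'$ outside this bad set.

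First, for each $k \in \{1,\ldots, n\}$, I would consider the set
\[
L_k := \{\vec{x} \in \mathbb{R}^n : (A\vec{x})_k = 0\}.
\]
This is the joint zero set of the two real linear functionals $\vec{x} \mapsto \sum_j \re(A_{kj}) x_j$ and $\vec{x} \mapsto \sum_j \im(A_{kj}) x_j$. Since $A_{kk}\neq 0$ by hypothesis, at least one of $\re(A_{kk}), \im(A_{kk})$ is non-zero, so at least one of these two real linear functionals is non-trivial. Hence $L_k$ is a proper linear subspace of $\mathbb{R}^n$ of codimension at least one, and in particular is closed with empty interior.

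Next, the bad set $B := \bigcup_{k=1}^n L_k$ is a finite union of such closed nowhere dense sets, and is therefore itself closed and nowhere dense. Its complement $\mathbb{R}^n \setminus B$ is open and dense. So, for any given $\vec{f}$ with positive entries and any $\varepsilon > 0$, I can pick $\vec{f}' \in \mathbb{R}^n \setminus B$ with $\|\vec{f}' - \vec{f}\| < \varepsilon$. By construction, $\vec{v} := A\vec{f}'$ satisfies $v_k \neq 0$ for every $k$, as required. Choosing $\varepsilon < \min_k f_k$ additionally preserves positivity of the entries of $\vec{f}'$, should the application demand it.

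The lemma is genuinely elementary and I do not anticipate any real obstacle. The only role of the diagonal hypothesis $A_{kk}\neq 0$ in the argument is to guarantee that the $k$-th row of $A$ is non-zero, which is precisely what is needed to keep each $L_k$ a proper subspace; without this, some $L_k$ could coincide with all of $\mathbb{R}^n$ and no perturbation would rescue the corresponding entry of $A\vec{f}'$.
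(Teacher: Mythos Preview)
Your argument is correct. The set $L_k$ is indeed a proper real-linear subspace once the $k$-th row of $A$ is non-zero, and a finite union of closed proper subspaces is nowhere dense in $\mathbb{R}^n$; the conclusion follows.

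Your route is genuinely different from the paper's. The paper proceeds constructively: it locates the first zero entry of $A\vec{f}$, say in position $j_1$, and perturbs only the $j_1$-th coordinate of $\vec{f}$ by a small $\epsilon>0$; since $A_{j_1j_1}\neq 0$ this makes the $j_1$-th entry of the image equal to $\epsilon A_{j_1j_1}\neq 0$, while by continuity the earlier entries stay non-zero. One then iterates down the vector. Thus the paper actually uses the \emph{diagonal} hypothesis in an essential way, pairing the $j$-th coordinate of $\vec{f}$ with the $j$-th entry of $A\vec{f}$. Your density argument is shorter and, as you observe, only needs each row of $A$ to be non-zero; the trade-off is that it is non-constructive, whereas the paper's proof tells you exactly which coordinates to nudge and in which order, which matches the way the lemma is applied downstream (adjusting the coefficients $c_i$ one equivalence class at a time).
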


\begin{proof}
Given $\vec{f}$ with all entries positive, if there are no zero entries in $\vec{v}=A\vec{f}$ then the result is already proven. Otherwise, consider the first non-zero entry located at $\vec{v}_{j_1}, j_1\in\{1,\dots,n\}$. Then, alter the vector $\vec{f}$ to become $\vec{f}'$ so that $\vec{f}_i':=\vec{f}_i$ for all $i\neq j_1$ and $\vec{f}'_{j_1}:=\vec{f}_{j_1}+\epsilon$, where $\epsilon>0$ is sufficiently small so that the first $j_1-1$ entries in the new vector $\vec{v}'=A\vec{f}'$ are non-zero. From the fact that the diagonal entries of $A$ are non-zero, the entry $\vec{v}_{j_1}'$ is also non-zero. We repeat the procedure for each subsequent zero entry in the vector $\vec{v}'$, and since the vector is finite-dimensional this completes the proof.~\qedhere
\end{proof}

By Lemma~\ref{4.85} we have that we can partition the sequence $(\lambda_i)_{i\in S}$ into finite disjoint sets. The same is true for the associated sequence $(c_i)_{i\in S}$. Consequently, the $c_i$ which appear in \eqref{4.82} in $T_4(k)$ all belong to the same equivalence class. Then, to establish that there exists a sequence $(c_i)_{i\in S}$ such that $T_4(k)\neq 0$, we partition the $c_i$ elements into their equivalence classes, each of finite size, and to each class apply Lemma~\ref{4.86}: the elements, $c_i$, comprising the entries of the vector $\vec{f}$, whilst each row of the matrix $A$ encodes the exponential and $E(\lambda_i)$ relations that form the sum when resonance occurs. Assuming the positive sequence $(c_l)_{l\in S}$ is chosen to satisfy both \eqref{4.89} and \eqref{4.91} one can use Lemma~\ref{4.86} to vary the sequence, $(c_l)_{l\in S}$ slightly so that resonance appears and the convergence of \eqref{4.89} and \eqref{4.91} remain unchanged. Then, by applying the generalised Janas-Moszynski theorem \cite{17} to eliminate the error term of order $k^{-2}$  we see that the solution of \eqref{4.82} behaves asymptotically like the solution of

\begin{equation}\label{4.56}\vec{m}_{k+1}=\bigg(I+\frac{1}{k\sin\theta(\lambda_t)}\left(\begin{array}{cc}
0&\overline{Y(\lambda_t)}\\
 Y(\lambda_t)&0\end{array}\right)\bigg)\vec{m}_k,\end{equation}
where  $$Y(\lambda_t):=\sum\limits_{l\in I_t^-} c_l\sum\limits_{j=0}^{T-1}E_j(\lambda_t)e^{-i((T-j)\omega_l+\phi_l)}+\sum\limits_{l\in I_t^+}c_l\sum\limits_{j=0}^{T-1}{E}_j(\lambda_t)e^{i((T-j)\omega_l +\phi_l)}\neq 0.$$

 Much like in the individual eigenvalue case, we need to diagonalise the matrix in \eqref{4.56}. The matrix is already Hermitian and has trace zero, and therefore by removing the term $|Y(\lambda_t)|$ as a factor we observe that the new matrix has eigenvalues $1$ and $-1$ and determinant $-1$. Thus, \eqref{4.56} is equivalent to
\begin{align*}
\vec{m}_{k+1}&=W\left[\prod\limits_{r=1}^{k} \bigg(I+\frac{|Y(\lambda_t)|}{r\sin\theta(\lambda_t)}\left(\begin{array}{cc}
1&0\\
0&-1\end{array}\right)\bigg)\right]W^{-1}\vec{m}_1~~~\\
&=W\left(\begin{array}{cc}
\prod\limits_{r=1}^{k}\left(1+\frac{|Y(\lambda_t)|}{r\sin\theta(\lambda_t)}\right)&0\\
0&\prod\limits_{r=1}^{k}\left(1-\frac{|Y(\lambda_t)|}{r\sin\theta(\lambda_t)}\right)\end{array}\right)W^{-1}\vec{m}_1\\
\sim_r \vec{m}_{k+1}&=\left(\begin{array}{cc}
\left(\widetilde{c}_7+o(1)\right) k^{\left|\frac{Y(\lambda_t)}{\sin\theta(\lambda_t)}\right|}&0\\
0&\left(\widetilde{c}_8+o(1)\right) k^{-\left|\frac{Y(\lambda_t)}{\sin\theta(\lambda_t)}\right|}\end{array}\right)\vec{m}_1,
\end{align*} for non-zero constants $\widetilde{c}_7,\widetilde{c}_8$ depending on $\lambda_t\in\sigma_{ell}(J_T)$.

 Returning to the original recurrence relation, this implies that for $\lambda_t$ there exists a decaying (subordinate) solution, $u_k(\lambda_t)\sim k^{-\left|\frac{Y(\lambda_t)}{\sin\theta(\lambda_t)}\right|}$. Since $\lambda_t$ was an arbitrary element of the sequence $(\lambda_i)_{i\in S}$ this concludes the argument.

 \begin{remark}\label{4.3}
 For the case of only finitely many $\lambda_i,i\in\{1,\dots,n\}$, it is possible to amend the proof of Theorem~\ref{transfer3} so that we compute subordinate solutions that reside in the sequence space $l^2(\mathbb{N};\mathbb{C})$. Replacing the potential $(q_k)$ in the theorem by $(c{q}_k)$ replaces $Y(\lambda_t)$ in \eqref{4.56} with $c{Y}(\lambda_t).$  Then, choosing $c$ sufficiently large such that $\left|\frac{cY(\lambda_t)}{\sin\theta(\lambda_t)}\right|>\frac{1}{2}$ for all $t\in\{1,\dots,n\}$ the subordinate solutions will all lie in $l^2$.
 \end{remark}

The previous theorem proves only that the sum of the potentials simultaneously produces subordinate solutions associated to all $(\lambda_i)_{i\in S}$. It has not been shown that a potential of this structure simultaneously satisfies the initial conditions encoded in the periodic Jacobi operator necessary for an eigenvalue to exist for each $\lambda_i,i\in\{1,\dots,n\}$. This, in general, leads to solving a system of non-linear equations. For the case of $n=2$ we explicitly solve this system.

\begin{thm}\label{4.27}
Let $\lambda_1,\lambda_2\in\sigma_{ell}(J_T)$, $\theta(\lambda_1)\neq\frac{\pi}{2}\neq \theta(\lambda_2)$ and assume $E(\lambda_i)\neq 0$ for $i\in\{1,2\}$. Then for $q'_n$ given by \eqref{mult1} for $n\geq 5$, and suitably chosen  $q'_1,q_2',q_3',q_4'$ we have that $\{\lambda_1,\lambda_2\}\subseteq\sigma_{p}(J_T+Q'+R)$ where $Q'$ is a diagonal matrix with entries $q'_n$ of a Wigner-von Neumann structure and order $\frac{1}{n}$ as $n\rightarrow\infty$, and $$R:=\left(\begin{array}{cccc} 0&r&0&\dots\\
r&\ddots\\
0\\
\vdots\end{array}\right),~r\in\mathbb{R}.$$ For a generic choice of $J_T$ we can choose $r=0$.
\end{thm}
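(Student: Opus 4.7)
My strategy is to combine Theorem~\ref{transfer3} (which controls the tail, and hence subordinacy and the $l^{2}$-property) with a finite-dimensional adjustment of the first four diagonal entries of the potential and the off-diagonal parameter $r$, so as to force each of the two subordinate solutions to satisfy the first-row boundary condition of the eigenvalue problem.

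First I would invoke Theorem~\ref{transfer3} with $S=\{1,2\}$ and, as noted in Remark~\ref{4.3}, absorb a large enough positive constant into the coefficients $c_i$ so that the two subordinate solutions $\underline{u}^{(i)}\in l^{2}(\mathbb{N};\mathbb{C})$, $i=1,2$, exist for the potential $\sum c_i q_n^{(i)}$. The decisive observation is that subordinacy and the $l^{2}$-property depend only on the asymptotics of the potential, so changing $q'_1,q'_2,q'_3,q'_4$ and introducing the rank-two boundary perturbation $R$ affect only finitely many components of the eigenvector; extending the subordinate direction at infinity backwards through the modified recurrence therefore still produces an $l^{2}$ sequence.

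Once a scaling of $\underline{u}^{(i)}$ at infinity is fixed, $u_4^{(i)}$ is determined by the row-$5$ equation, which uses only the prescribed $q'_5$. Running the recurrence backwards then gives $u_3^{(i)}$, $u_2^{(i)}$, $u_1^{(i)}$ as explicit rational functions of the free parameters $q'_4$, $q'_3$, and $(q'_2,r)$ respectively (acting on rows $n=4,3,2$). The only remaining obligation for $\underline{u}^{(i)}$ to be an eigenvector of $J_T+Q'+R$ is the first-row equation
\begin{equation*}
(b_1+q'_1-\lambda_i)\,u_1^{(i)}+(a_1+r)\,u_2^{(i)}=0,\qquad i=1,2.
\end{equation*}
With $(q'_2,q'_3,q'_4)$ fixed, this is a $2\times 2$ linear system in $(q'_1,r)$ with coefficient matrix $\left(\begin{smallmatrix} u_1^{(1)} & u_2^{(1)} \\ u_1^{(2)} & u_2^{(2)} \end{smallmatrix}\right)$, uniquely solvable whenever the Wronskian-type determinant $u_1^{(1)}u_2^{(2)}-u_2^{(1)}u_1^{(2)}$ does not vanish.

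The principal technical obstacle is to verify that $(q'_2,q'_3,q'_4)$ can be chosen so that this determinant is non-zero. I would do this by differentiating with respect to $q'_4$: since $\lambda_1\neq\lambda_2$, the two affine maps $q'_4\mapsto u_3^{(i)}$ have different slopes, and this propagates through the backward recurrence to make the determinant a non-trivial real-analytic function of the parameters, hence non-vanishing off a nowhere-dense set. For the final claim that $r=0$ suffices for generic $J_T$, the pair of row-$1$ equations reduces to the single compatibility condition $a_1\bigl(u_2^{(2)}/u_1^{(2)}-u_2^{(1)}/u_1^{(1)}\bigr)=\lambda_2-\lambda_1$, one real equation in the three unknowns $(q'_2,q'_3,q'_4)$; for $(a_j,b_j)$ outside a thin exceptional subset, the left-hand side is a non-constant real-analytic function of these unknowns, so a solution exists, and $q'_1$ is then read off from the first-row equation. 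Thus the passage from subordinacy to eigenvector membership is closed along the same lines as the single-eigenvalue case of Theorem~\ref{transfer5}, the only new ingredient being the finite-dimensional transversality/non-degeneracy argument sketched above.
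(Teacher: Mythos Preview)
Your overall strategy---fix the tail by Theorem~\ref{transfer3}, then back-solve through rows $4,3,2,1$ using the finitely many free parameters---is exactly the right idea and is also what the paper does. However, the execution contains a concrete error. You claim that, with $(q'_2,q'_3,q'_4)$ fixed, the two first-row conditions
\[
(b_1+q'_1-\lambda_i)\,u_1^{(i)}+(a_1+r)\,u_2^{(i)}=0,\qquad i=1,2,
\]
form a $2\times2$ \emph{linear} system in $(q'_1,r)$ with coefficient matrix $\bigl(\begin{smallmatrix} u_1^{(1)} & u_2^{(1)} \\ u_1^{(2)} & u_2^{(2)} \end{smallmatrix}\bigr)$. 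But $r$ already enters at row $2$, since $R$ perturbs both off-diagonal entries: the row-$2$ equation of $J_T+Q'+R$ reads $(a_1+r)u_1^{(i)}+(b_2+q'_2)u_2^{(i)}+a_2u_3^{(i)}=\lambda_iu_2^{(i)}$, so $u_1^{(i)}=\bigl((\lambda_i-b_2-q'_2)u_2^{(i)}-a_2u_3^{(i)}\bigr)/(a_1+r)$ depends on $r$. Substituting back, the first-row equations become quadratic in $r$, not linear, and your ``Wronskian-type determinant'' is not a fixed number but itself a function of $r$. Thus the solvability criterion you state is not the correct one, and the argument as written does not close. A related slip: the slopes of $q'_4\mapsto u_3^{(i)}$ are $-u_4^{(i)}/a_3$, which have nothing to do with $\lambda_1\neq\lambda_2$; your non-degeneracy sketch needs a different mechanism.

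The paper sidesteps this coupling by \emph{not} treating $r$ as a generic unknown. It first attempts $r=0$ and makes explicit choices: in the main case $u_3^{(1)}\neq0$ it sets $u_2^{(1)}=0$ (so the first-row condition for $i=1$ is trivially met with $q'_1=\lambda_1-b_1$), then reads off $q'_3$, $u_2^{(2)}$, $q'_2$, and $u_1^{(2)}=a_1u_2^{(2)}/(\lambda_2-\lambda_1)$. Only in a degenerate sub-case where this forces $u_2^{(2)}=0$ does it branch: if $a_1\le\tfrac12|\lambda_1-\lambda_2|$ an explicit quadratic in $x=q'_1+b_1-\lambda_1$ is solved; otherwise $r$ is used once, simply to shift $a_1\mapsto a_1+r$ into the range where the previous sub-case applies, and the whole construction is rerun. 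Your transversality approach could in principle be repaired (e.g.\ fix $r$ and use $(q'_1,q'_2)$ as the final unknowns, reducing to a single nonlinear equation in $q'_2$), but as written the linear-system step is wrong.
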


\begin{proof} Choose $q_4'$ as given by \eqref{mult1}. By the previous remark there exist non-zero solutions $v_n^{(1)}$, $v_n^{(2)}$ in $l^2$ such that \eqref{4.26} is satisfied. Set $u_n^{(1)}=v_n^{(1)}$, $u_n^{(2)}=v_n^{(2)}$ for $n\geq 3$. Then,  $$a_{n-1}u^{(1)}_{n-1}+a_nu^{(1)}_{n+1}+(q'_n+b_n-\lambda_1)u^{(1)}_n=0,~{\rm for}~n\geq 4$$ and $$a_{n-1}u^{(2)}_{n-1}+a_nu^{(2)}_{n+1}+(q'_n+b_n-\lambda_2)u^{(2)}_n=0~{\rm for}~n\geq 4.$$ Thus, $u_n^{(1)}$, $u_n^{(2)}$ for $n\geq 3$ and $q_n'$ for $n\geq 4$ are now given. However, for $\lambda_1,\lambda_2$ to be embedded eigenvalues, the following system of equations still needs to be satisfied:
\begin{align*}
a_2u^{(1)}_{2}+(q'_3+b_3)u^{(1)}_3+a_3u^{(1)}_4&=\lambda_1u^{(1)}_3\\
a_2u^{(2)}_2+(q'_3+b_3)u^{(2)}_3+a_3u^{(2)}_4&=\lambda_2u^{(2)}_3\\
a_1u^{(1)}_1+(q'_2+b_2)u^{(1)}_2+a_2u^{(1)}_3&=\lambda_1u^{(1)}_2\\
a_1u^{(2)}_1+(q'_2+b_2)u^{(2)}_2+a_2u^{(2)}_3&=\lambda_2u^{(2)}_2\\
(q'_1+b_1)u^{(1)}_1+a_1u^{(1)}_2&=\lambda_1u^{(1)}_1\\
(q'_1+b_1)u^{(2)}_1+a_1u^{(2)}_2&=\lambda_2u^{(2)}_1,
\end{align*}
where $u^{(1)}_1$, $u^{(1)}_2$, $u^{(2)}_1$, $u^{(2)}_2$, $q'_1$, $q'_2$, $q'_3$ are presently undetermined, while $u_3^{(1)}$, $u_3^{(2)}$, $u_4^{(1)}$, $u_4^{(2)}$ are already defined. We consider two cases depending on the values of $u_3^{(1)}$ and $u_3^{(2)}$.

 (Case One) If $u_3^{(i)}\neq 0$, for some $i$, then without loss of generality let $i=1$ and we set $r=0$. Then, we set  $$q'_1=\lambda_1-b_1; q_2'=\frac{(\lambda_2-b_2)u^{(2)}_2-a_2u^{(2)}_3-a_1u^{(2)}_1}{u^{(2)}_2}; q'_3=\frac{(\lambda_1-b_3)u_3^{(1)}-a_3u^{(1)}_4}{u_3^{(1)}}.$$ Choosing $ u_1^{(1)}=-\frac{a_2u_3^{(1)}}{a_1}$; $u^{(1)}_2=0;$ $u^{(2)}_1=\frac{a_1u^{(2)}_2}{\lambda_2-\lambda_1}$; $u^{(2)}_2=\frac{\lambda_2u^{(2)}_3-a_3u^{(2)}_4-(q'_3+b_3)u^{(2)}_3}{a_2}$
 satisfies the six equations listed above, providing the $u_2^{(2)}$ we chose above is non-zero, i.e. $\left(\lambda_2-\lambda_1+\frac{a_3 u_4^{(1)}}{u_3^{(1)}}\right)u_3^{(2)}-a_3u_4^{(2)}\neq0$.

 If $\left(\lambda_2-\lambda_1+\frac{a_3 u_4^{(1)}}{u_3^{(1)}}\right)u_3^{(2)}-a_3u_4^{(2)}=0$ then this implies $u_3^{(2)}\neq 0$. We further subdivide into two cases. If $a_1\leq \frac{1}{2}\left|\lambda_1-\lambda_2\right|$ then set $$x=\frac{\lambda_2-\lambda_1+\sqrt{(\lambda_2-\lambda_1)^2-4a_1^2}}{2}\neq 0$$ and instead choose $$q_1'=\lambda_1-b_1+x; q_2'=-\frac{a_1^2\epsilon}{a_2x}; q'_3=\frac{(\lambda_1-b_3)u_3^{(1)}-a_3u^{(1)}_4}{u_3^{(1)}}+\epsilon$$ and set $u_1^{(1)}=\frac{u_3^{(1)}(-a^2_2+\epsilon(q_2'+b_2-\lambda_1))}{a^2_2}$; $u_2^{(1)}=-\frac{\epsilon u_3^{(1)}}{a_2}$; $u_1^{(2)}=\frac{u_3^{(2)}(-a^2_2+\epsilon(q_2'+b_2-\lambda_2))}{a^2_2};$ $u_2^{(2)}=-\frac{\epsilon u_3^{(2)}}{a_2}$ for any non-zero constant $\epsilon$ chosen such that $u_1^{(1)}\neq 0, u_1^{(2)}\neq 0$. With this choice it is easy to check the six equations listed above are satisfied. If instead $a_1> \frac{1}{2}\left|\lambda_1-\lambda_2\right|$ then we choose $r$ such that $0<a_1+r\leq \frac{1}{2}|\lambda_1-\lambda_2|$ and return to the start of case one.

(Case Two) Here we have $u_3^{(1)}=u_3^{(2)}=0$. Then, since this implies $u_4^{(1)}$ and $u_4^{(2)}$ are both non-zero, it is possible to add an arbitrary perturbation to $q'_4$ so that by continuity $u_3^{(j)}$ becomes non-zero for both $j$. The conditions for case one are then satisfied.~\qedhere
\end{proof}

\begin{remark}
Theorem~\ref{4.27} gives an illustrative example of two embedded eigenvalues constructed by a Wigner-von Neumann type perturbation. There exists another technique that succeeds in embedding infinitely many eigenvalues into the essential spectrum of a period-$T$ Jacobi operator, and we will explore this in an upcoming publication. However, this other technique does not give as explicit a formula for the potential.
\end{remark}

\subsection*{Acknowledgements}
The authors would like to thank G. Teschl for his very useful remarks. E.J. was supported by the Engineering and Physical Sciences Research Council (grant EP/M506540/1). S.N. was supported by the Russian Science Foundation (project no. 15-11-30007), RFBR  grant 16-11-00443a, NCN grant 2013/09/BST/04319 and LMS. S.N. also expresses his gratitude to the University of Kent at Canterbury for the support and hospitality. Finally, we would like to thank the referee for their insightful comments.

\end{document}